\newtheorem{theorem}{Theorem}
\newtheorem{lemma}{Lemma}
\newtheorem{definition}{Definition}
\newtheorem{corollary}{Corollary}
\newtheorem{remark}{Remark}
\newtheorem{ex}{Example}
\newcommand{\mR}{\mathbb{R}}
\begin{document}

\title{Explicit formulas for the Dunkl dihedral kernel and the $(\kappa, a)$-generalized Fourier kernel}

\author{Denis Constales$^{1}$ \footnote{E-mail: {\tt  denis.constales@ugent.be}} \and Hendrik De Bie$^{1}$\footnote{E-mail: {\tt hendrik.debie@ugent.be}} \and Pan Lian$^{2}$\footnote{E-mail: {\tt pan.lian@outlook.com}} }

\vspace{10mm}
\date{\small{1:  Department of Mathematical Analysis\\ Faculty of Engineering and Architecture -- Ghent University}\\
\small{Galglaan 2, 9000 Gent,
Belgium}\\ \vspace{5mm}
\small{2: School of Mathematical Sciences -- Tianjin Normal University}\\
\small{
Binshui West Road 393, Tianjin 300387, P.R. China 
}\vspace{5mm}
}

\maketitle

\begin{abstract}
In this paper, a new method is developed to obtain explicit and integral expressions for the kernel of the
$(\kappa, a)$-generalized Fourier transform for $\kappa =0$. In the case of dihedral groups, this method is also applied to the Dunkl kernel as well as the Dunkl Bessel function.
The method uses the introduction of an auxiliary variable in the series expansion of the kernel, which is subsequently Laplace transformed. The kernel in the Laplace domain takes on a much simpler form, by making use of the Poisson kernel. The inverse Laplace transform can then be computed using the generalized Mittag-Leffler function to obtain integral expressions. In case the parameters involved are integers, explicit formulas are obtained using partial fraction decomposition.

New bounds for the kernel of the $(\kappa, a)$-generalized Fourier transform are obtained as well.

~\\

{\em Keywords}: Dunkl kernel, Generalized Fourier transform, Dihedral group, Bessel function, Poisson kernel
~\\

{\em Mathematics Subject Classification:} 33C52, 43A32, 42B10
\end{abstract}

\tableofcontents

\section{Introduction}

Recently, a lot of attention has been given to various generalizations of the Fourier transform. This paper focusses on two in particular, namely the Dunkl transform \cite{D1, deJ} and the $(\kappa, a)$-generalized Fourier transform \cite{SKO}.
Both transforms depend on a number of parameters, and as such it is an open problem, except for certain special cases, to find concrete formulas for their integral kernels.

Our aim in this paper is to develop a new method for finding explicit expressions as well as integral expressions for these kernels. Explicit expressions can be obtained when some of the arising parameters take on rational or integer values. The integral expressions we will obtain are valid in full generality and are expressed in terms of the generalized Mittag-Leffler function (see \cite{MH} or the subsequent Definition \ref{DefML}).

Essentially our method works as follows. Consider the following series expansion, for $x,y \in \mR^m$
 \[
 K^{m}(x,y)=2^{\lambda}\Gamma(\lambda+1) \sum_{j=0}^{\infty} (-i)^j\frac{\lambda+j}{\lambda}z^{-\lambda}J_{j+\lambda}(z)C^{\lambda}_{j}(\xi)\]
 with $\lambda=(m-2)/2$, $z=|x||y|$,  $\xi=\langle x,y\rangle/z$, $J_{j+\lambda}(z)$ the Bessel function and $C_{j}^{\lambda}(\xi)$  the Gegenbauer polynomial.
It is not so easy to recognize that this is the classical Fourier kernel $e^{-i\langle x, y\rangle}$.

However, when we introduce an auxiliary variable $t$ in the kernel  as follows
\[
K^{m}(x,y,t)=2^{\lambda}\Gamma(\lambda+1) \sum_{j=0}^{\infty}(-i)^j \frac{\lambda+j}{\lambda}z^{-\lambda}J_{j+\lambda}(t z)C^{\lambda}_{j}(\xi)
\]
we can take the Laplace transform in $t$ of $K^{m}(x,y,t)$. Simplifying the result by making use of the Poisson kernel (see subsequent Theorem \ref{pe}) then yields
\[
\mathcal{L}(K^{m}(x,y,t))=\Gamma(\lambda+1)\frac{1}{(s+i \langle x, y\rangle)^{\lambda+1}}.
\]
of which we immediately compute the inverse Laplace transform as
\[
K^{m}(x,y,t) = t^{\frac{m-2}{2}} e^{-i t \langle x, y\rangle}
\]
and the classical Fourier kernel is recovered by putting $t=1$.

We develop this method in full generality for the Dunkl kernel related to dihedral groups, as well as for the $(\kappa, a)$- generalized Fourier transform when $\kappa =0$. The restriction to dihedral groups is necessary, because only then the Poisson kernel for the Dunkl Laplace operator is known, see \cite{DX} or subsequent Theorem \ref{DunklPoisson}.

Let us describe our main results. The Laplace transform of the  $(0,a)$-generalized Fourier transform is obtained in Theorem \ref{laplradial}.
When $a = 2/n$ and $m$ is even, the result is a rational function and we can apply partial fraction decomposition to obtain an explicit expression, see Theorem \ref{th3}.  We prove that the kernel  for $a = 2/n$ is bounded by 1 in Theorem \ref{th7}, for both even and odd dimensions. For arbitrary $a$, the integral expression in terms of the generalized Mittag-Leffler function is given in Theorem \ref{ga}.

The Laplace transform of the Dunkl kernel for dihedral groups is obtained in Theorem \ref{ld1}. Two alternative integral expressions for the Dunkl kernel, again in terms of the generalized Mittag-Leffler function, are given in Theorem \ref{m1} and \ref{m2}.

The paper is organized as follows. After the necessary preliminaries in section \ref{prelim}, we first study the $(\kappa,a)$-generalized Fourier transform for $\kappa =0$ in section \ref{radialFT}. In section \ref{dunklFT} we then study the Dunkl kernel  for dihedral groups. We also show how our methods can be applied to the Dunkl Bessel function.

\section{Preliminaries}
\label{prelim}
In this section, we give a brief overview of the theory of Dunkl operators, the $(\kappa, a)$-generalized Fourier transform and the Laplace transform. Most of these results are taken from \cite{DX}, \cite{rm} and \cite{SKO}.
We use the notation $\langle \cdot,\cdot \rangle$ for the standard inner product on $\mathbb{R}^{m}$ and $|\cdot|$ for the associated norm. For a non-zero vector $\alpha\in \mathbb{R}^{m}$, the reflection $r_{\alpha}$ with respect to the hyperplane orthogonal to $\alpha$ is defined by
\[ r_{\alpha}(x)=x-2\frac{\langle \alpha, x \rangle}{|\alpha|^{2}}\alpha.
\]
A reduced root system $\mathcal{R}$ is a finite set of non-zero vectors in $\mathbb{R}^{m}$ such that $r_{\alpha} \mathcal{R}=\mathcal{R}$ and $\mathbb{R} \alpha \cap \mathcal{R}= \{\pm\alpha\}$ for all $\alpha\in \mathcal{R}$. The finite reflection group generated by $\{r_{\alpha}: \alpha\in \mathcal{R}\}$ is  a subgroup of the orthogonal group $O(m)$ which is  called  a Coxeter group. Three infinite families of root systems  are  $A_{n-1}$, $B_{n}$ and the root system associated to the dihedral groups. We give the latter as an example which will be used later.
\begin{ex} In the Euclidean space $\mathbb{R}^{2}$, let $d\in O(2, \mathbb{R})$ be the rotation over $2\pi/k$  and $e$ the reflection at the $y$-axis. The group $I_{k}$ generated by $d$ and $e$ consists of all orthogonal transformations which preserve a regular $k$-sided polygon centered at the origin. The group $I_{k}$ is a finite reflection group which is usually called  dihedral group.
 \end{ex}
We define the action of $G$ on functions by \[(g\cdot f)(x):=f(g^{-1}\cdot x),\qquad x\in \mathbb{R}^{m}, g\in G.\]
A multiplicity function $\kappa: \mathcal{R} \rightarrow \mathbb{C}$ is a function invariant under the action of $G$. Furthermore, set $\mathcal{R}_{+}:=\{\alpha\in \mathcal{R}:\langle\alpha, \beta \rangle>0\}$ for some $\beta \in \mathbb{R}^{m}$ such that $\langle\alpha, \beta \rangle \neq 0$ for all $\alpha\in \mathcal{R}$. From now on, fix the positive subsystem $\mathcal{R}_{+}$  and the multiplicity function $\kappa$. The Dunkl operator $T_{i}$ associated to $\mathcal{R}_{+}$ and $\kappa$  is then defined by
\[T_{i}f(x)=\frac{\partial f}{\partial x_{i}}+\sum_{\alpha\in \mathcal{R}_{+}}\kappa(\alpha) \alpha_{i}\frac{f(x)-f(r_{\alpha}(x))}{\langle \alpha, x\rangle}, \qquad f\in C^{1}(\mathbb{R}^{m})
\]
where $\alpha_{i}$ is the $i$-th coordinate of $\alpha$. All the $T_{i}$ commute with each other. They reduce to the corresponding partial derivatives when $\kappa=0$. The Dunkl Laplacian $\Delta_{\kappa}$ is then defined as $\Delta_{\kappa}=\sum_{i=1}^{m}T_{i}^2$.
The weight function associated with the root system $\mathcal{R}$ and the multiplicity function $\kappa$ is given by \[\upsilon_{\kappa}(x):=\prod_{\alpha\in \mathcal{R}_{+}}|\langle x,\alpha\rangle|^{2\kappa(\alpha)}.\]
It is $G$-invariant and homogeneous of degree $2\langle \kappa\rangle$ where the index $\langle \kappa \rangle$ of the multiplicity function $\kappa$ is defined as \[\langle \kappa\rangle :=\sum_{\alpha\in \mathcal{R}_{+}}\kappa_{\alpha}=\frac{1}{2}\sum_{\alpha\in \mathcal{R}}\kappa_{\alpha}.\]  We also denote by $\mathcal{H}_{j}(\upsilon_{\kappa})$ the space of Dunkl harmonics of degree $j$, i.e. $\mathcal{H}_{j}(\upsilon_{\kappa})=\mathcal{P}_{j}\cap \rm{ker} \Delta_{\kappa}$ with $\mathcal{P}_{j}$ the space of homogeneous polynomials of degree $j$. There exists a unique linear and homogeneous isomorphism on polynomials which intertwines the algebra of Dunkl operators and the algebra of usual partial differential operators, i.e. $V_{\kappa}(\mathcal{P}_{j})=\mathcal{P}_{j}, \quad V_{\kappa}|_{\mathcal{P}_{0}}=id$ and $T_{\xi} V_{\kappa}=V_{\kappa} \partial_{\xi}$ for all $\xi \in \mathbb{R}^{m}$.
In the following, we denote by $P_{j}(G; x,y)$ the reproducing kernel of $\mathcal{H}_{j}(\upsilon_{\kappa})$ and $P(G; x,y)$ the Poisson kernel. For $j \in \mathbb{N}$ and $|y|\le |x|=1$, we have \cite{DX}
\begin{eqnarray}\label{pd} P_{j}(G; x,y)=\frac{j+\lambda_{\kappa}}{\lambda_{\kappa}} V_{\kappa}[C_{j}^{\lambda_{\kappa}}(\langle \cdot, \frac{y}{|y|}\rangle)](x)|y|^{j},\end{eqnarray}
and
\begin{eqnarray}\label{pois}P(G;x,y)=\sum_{j=0}^{\infty} P_{j}(G;x, y)=\sum_{j=0}^{\infty} P_{j}(G;x,\frac{y}{|y|})|y|^{j}=V_{\kappa}\biggl(\frac{1-|y|^2}{(1-2\langle \cdot, y\rangle +|y|^2)^{\lambda_{\kappa}+1}}\biggr)(x)\end{eqnarray}
where $\lambda_{\kappa}= \langle \kappa\rangle +\frac{m-2}{2}$. R\"osler  \cite{rm1} proved there exists a unique positive probability-measure $\mu_{x}(\xi)$ on $\mathbb{R}^{m}$ such that
\[V_{\kappa}f(x)=\int_{\mathbb{R}^{m}}f(\xi)d\mu_{x}(\xi)\] for the positive multiplicity function. In \cite{SKO}, Dunkl's interwining operator $V_{\kappa}$ was extended to $C(B)$ with $B$ the closed unit ball in $\mathbb{R}^m$ for the regular values of $\kappa$.
Denoting \[\tilde{(V_{\kappa}}h):=(V_{\kappa}h_{y})(x)=\int_{\mathbb{R}^{m}}h(\langle \xi, y\rangle)d\mu_{x}(\xi),\]
this operator  satisfies
\begin{eqnarray}\label{sv1}||\tilde{V_{\kappa}}h||_{L^{\infty}(B\times B)}\le ||h||_{L^{\infty}([-1,1])}.\end{eqnarray}

It is known that the operators $T_{j}$ have a joint eigenfunction $E_{\kappa}(x,y)$ satisfying
\[ T_{j}E_{\kappa}(x,y)=-iy_{j}E_{\kappa}(x,y), \qquad j=1,\ldots,m.\]
The function $E_{\kappa}(x,y)$ is called the Dunkl kernel,  which is the exponential function $e^{-i\langle x, y\rangle}$ when $\kappa=0$. This kernel together with the weight function is used to define the so-called Dunkl transform \[\mathcal{F}_{\kappa}: L^{1}(\mathbb{R}^{m}, \upsilon_{\kappa})\rightarrow C(\mathbb{R}^{m})\] by
\[\mathcal{F}_{\kappa}f(y):=c_{\kappa}\int_{\mathbb{R}^{m}}f(x)E(x,y)\upsilon_{\kappa}(x)dx \quad(y\in \mathbb{R}^{m})
\] with $c_{\kappa}^{-1}=\int_{\mathbb{R}^{m}} e^{-|x|^{2}/2} \upsilon_{\kappa}(x)dx$ the Mehta constant associated to $G$. Again, when $\kappa=0$, we recover the classical Fourier transform. The Dunkl transform shares many properties with the Fourier transform. In \cite{HR}, using the harmonic oscillator $-(\Delta-|x|^2)/2$,  Howe found the spectral description of the Fourier transform and its eigenfunctions forming the basis of $L^{2}(\mathbb{R}^{m})$:
\[\mathcal{F}=e^{\frac{i\pi m}{4}}e^{\frac{i\pi}{4}(\Delta-|x|^2)}\]
with $\Delta$ the Laplace operator. Similarly, the Dunkl transform also has the exponential notation
\[\mathcal{F}_{\kappa}=e^{\frac{i\pi \mu}{4}}e^{\frac{i\pi}{4}(\Delta_{\kappa}-|x|^2)}\]
where $\mu=m+2\langle \kappa \rangle$, see \cite{Said}.
In \cite{SKO}, the authors defined a radially deformed Dunkl-type harmonic oscillator \[ \Delta_{\kappa,a}=|x|^{2-a}\Delta_{\kappa}-|x|^{a},\qquad a>0.\]
Then the $(\kappa,a)$-generalized Fourier transform is defined by
\[\mathcal{F}_{\kappa,a}=e^{\frac{i\pi}{2a}(m-2+2\langle \kappa\rangle+a)}e^{\frac{i\pi}{2a} \Delta_{\kappa,a}}\]
in $L^{2}(\mathbb{R}^{m},  |x|^{a-2}\upsilon_{\kappa}(x) )$. We write the $(\kappa,a)$-generalized Fourier transform as an integral transform
\[\mathcal{F}_{\kappa,a}f(y)=c_{\kappa,a}\int_{\mathbb{R}^{m}}B_{\kappa,a}(x,y)f(x)|x|^{a-2}\upsilon_{\kappa}(x)dx\]
where $c_{\kappa,\alpha}^{-1}=\int_{\mathbb{R}^{m}} e^{-|x|^{a}/a}|x|^{a-2}\upsilon_{\kappa}(x)dx$.
The series expression of $B_{\kappa,a}(x,y)$ is given in \cite{SKO} as follows,
\begin{theorem} \label{the1} For $x, y\in \mathbb{R}^{m}$ and $a>0$, we have
\[B_{\kappa,a}(x,y)=a^{\frac{2\langle \kappa\rangle+m-2}{a}}\Gamma\biggl(\frac{2\langle \kappa\rangle+m+a-2}{a}\biggr) \sum_{j=0 }^{\infty}  B_{\kappa,a}^{(j)}(z) P_{j}(G;\omega,\eta)\]
where $x=|x| \omega,$ $y=|y|\eta$, $z=|x||y|$, $\lambda_{\kappa,a,j}=\frac{2j+2\langle \kappa\rangle+m-2}{a}$,
\[B_{\kappa, a}^{(j)}(z)=e^{-i\frac{\pi}{2}\frac{j}{a}} z^{-\langle \kappa\rangle-m/2+1}J_{\lambda_{\kappa,a,j}}\biggl(\frac{2}{a}z^{a/2}\biggr),\]
and
\[P_{j}(G;\omega,\eta):=\biggl(\frac{\langle \kappa\rangle+j+\frac{m-2}{2}}{\langle \kappa\rangle+\frac{m-2}{2}}\biggr)V_{\kappa}[C_{j}^{\lambda_{\kappa}}(\langle \cdot, \eta\rangle)](\omega),
\]
is the reproducing kernel of the space of spherical $\kappa$-harmonic polynomials of degree $j$.
\end{theorem}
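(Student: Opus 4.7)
The plan is to diagonalize the radial Dunkl–type harmonic oscillator $\Delta_{\kappa,a}=|x|^{2-a}\Delta_\kappa-|x|^a$ and compute the action of $\mathcal{F}_{\kappa,a}=e^{i\pi(m-2+2\langle\kappa\rangle+a)/(2a)}e^{\frac{i\pi}{2a}\Delta_{\kappa,a}}$ termwise on a suitable orthonormal basis of $L^{2}(\mathbb{R}^m,|x|^{a-2}\upsilon_\kappa(x)dx)$. Writing $x=r\omega$ with $r=|x|$ and $\omega\in S^{m-1}$, one uses the polar decomposition of the Dunkl Laplacian
\[
\Delta_\kappa=\partial_r^2+\frac{2\langle\kappa\rangle+m-1}{r}\partial_r+\frac{1}{r^2}\Delta_\kappa^{S},
\]
where $\Delta_\kappa^S$ acts as the constant $-j(j+2\langle\kappa\rangle+m-2)$ on each space $\mathcal{H}_j(\upsilon_\kappa)$ of spherical $\kappa$-harmonics. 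The operator $\Delta_{\kappa,a}$ thus reduces on the tensor factor $\mathcal{H}_j(\upsilon_\kappa)$ to a one–dimensional Schrödinger–type operator in $r$.

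Next, I would construct the standard basis of radial eigenfunctions: on each $j$–sector, these have the form $r^{j}L_\ell^{(\lambda_{\kappa,a,j})}\bigl(\tfrac{2}{a}r^{a}\bigr)e^{-r^a/a}$ with $\ell\in\mathbb{N}$, where $L_\ell^{(\alpha)}$ is the Laguerre polynomial and the index $\lambda_{\kappa,a,j}=(2j+2\langle\kappa\rangle+m-2)/a$ matches the one in the statement. A direct computation gives that these functions diagonalize $\Delta_{\kappa,a}$ with eigenvalues linear in $\ell$ and $j$, so $\mathcal{F}_{\kappa,a}$ acts by a pure phase $e^{-i\pi(\ell+j/a)}$ (up to the global prefactor in the definition) on each basis element. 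Multiplying back the spherical factor, identifying the reproducing kernel $P_j(G;\omega,\eta)$ as the sum over an orthonormal basis of $\mathcal{H}_j(\upsilon_\kappa)$, and writing $\mathcal{F}_{\kappa,a}$ as an integral transform yields a double expansion for $B_{\kappa,a}(x,y)$: an outer sum over $j$ carrying $P_j(G;\omega,\eta)$ and an inner sum over $\ell$ in Laguerre polynomials.

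The main obstacle is collapsing the inner Laguerre sum. For each fixed $j$, one has to evaluate
\[
\sum_{\ell=0}^{\infty} e^{-i\pi\ell}\,\frac{\ell!}{\Gamma(\ell+\lambda_{\kappa,a,j}+1)}\,L_\ell^{(\lambda_{\kappa,a,j})}\!\bigl(\tfrac{2}{a}|x|^a\bigr)L_\ell^{(\lambda_{\kappa,a,j})}\!\bigl(\tfrac{2}{a}|y|^a\bigr)e^{-(|x|^a+|y|^a)/a},
\]
which is exactly the Hardy–Hille/Mehler bilinear generating formula for Laguerre polynomials specialised at argument $-1$, and it produces precisely the Bessel function $J_{\lambda_{\kappa,a,j}}\!\bigl(\tfrac{2}{a}(|x||y|)^{a/2}\bigr)$ together with the radial normalisation $z^{-\langle\kappa\rangle-m/2+1}$ in $B_{\kappa,a}^{(j)}(z)$. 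Finally I would track the normalising constants and phases: the Mehta–type integral $c_{\kappa,a}^{-1}=\int_{\mathbb{R}^m}e^{-|x|^a/a}|x|^{a-2}\upsilon_\kappa(x)dx$ and the $L^2$ norms of the Laguerre basis combine with the global phase of $\mathcal{F}_{\kappa,a}$ to yield the prefactor $a^{(2\langle\kappa\rangle+m-2)/a}\Gamma\bigl((2\langle\kappa\rangle+m+a-2)/a\bigr)$ and the phase $e^{-i\pi j/(2a)}$ stated in the theorem. The bookkeeping of these scalar factors, rather than any conceptual step, is where the delicate part of the proof lies.
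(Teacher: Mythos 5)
The paper offers no proof of this statement: Theorem \ref{the1} is quoted from \cite{SKO}, and your sketch is essentially the argument given there --- polar decomposition of $\Delta_\kappa$, the Laguerre-type eigenbasis $p(x)\,L_\ell^{(\lambda_{\kappa,a,j})}\bigl(\tfrac{2}{a}|x|^a\bigr)e^{-|x|^a/a}$ with $p\in\mathcal{H}_j(\upsilon_\kappa)$, and the Hille--Hardy bilinear generating function to resum the $\ell$-series into a Bessel function. Two points in your outline need care. First, the Hille--Hardy formula converges only for $|w|<1$, so ``specialising at argument $-1$'' is a boundary value, not a substitution: one must evaluate at $w=e^{-i(\pi-\epsilon)}$ and justify the limit $\epsilon\to 0^{+}$ (this is precisely where the analytic work in \cite{SKO} lies), so as stated this step is a gap rather than a computation. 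Second, your own eigenvalue calculation gives the phase $e^{-i\pi\ell}e^{-i\pi j/a}$ on each basis element, hence $e^{-i\pi j/a}$ in the $j$-th term of the kernel; this is the correct phase --- it reduces to $(-i)^j$ at $a=2$, recovering $e^{-i\langle x,y\rangle}$, and it is what the paper itself uses in its later formula for $K_a^m$ --- so you should not force your bookkeeping to reproduce the $e^{-i\frac{\pi}{2}\frac{j}{a}}$ printed in the statement, which is a typo.
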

This transform recovers the Dunkl transform when $a=2$, the Fourier transform when $a=2$ and $\kappa=0$. The operator $\mathcal{F}_{0,1}$ is the unitary inversion operator of the Schr\"{o}dinger model of the minimal representation of the group $O(m+1,2)$ \cite{KM}. The explicit expression of the Dunkl kernel is only known for the groups $\mathbb{Z}_{2}^{m}$, the root systems $A_{2}$, $B_{2}$ and some  dihedral groups with integer muliplicity function $\kappa$, see\cite{amr}, \cite{Ade},  \cite{D1}, \cite{DX} and \cite{DDY}. For the integral kernel $B_{\kappa,a}(x,y)$, except the already known Dunkl kernel, closed expressions have been found when $\kappa=0$ and $a=\frac{2}{n}$ with $ n\in \mathbb{N}$  in dimension 2, see \cite{Rad2}. For higher even dimension, an iterative procedure using derivatives is given there as well.  Pitt's inequalities and uncertainty principles for the $(\kappa, a)$-generalized Fourier transform have been established in \cite{J1, Go} .

The Laplace transform is an integral transform which takes a function of a positive real variable $t$ to a function of a complex variable $s$. For a function $f(t)$ which has an exponential growth  $|f(t)|\le Ce^{\alpha t}, t\ge t_{0}$, the Laplace transform is defined as
\[F(s)=\mathcal{L}(f(t))(s)=\int_{0}^{\infty}e^{-st}f(t)dt.\]
The inverse Laplace transform is given by the Bromwich integral or the Post's inversion formula. In practice, it is typically more convenient to decompose a Laplace transform into known transforms of functions obtained from a table, for example \cite{E2}. For more details  on the Laplace transform, we refer to \cite{DG}.

\section{The kernel of the $(\kappa,a)$-generalized Fourier transform}
\label{radialFT}
\subsection{Explicit expression of the kernel when $a=\frac{2}{n}$ and $m$ even}
 In this section,  we first establish the connection between the kernel of the $(0,a)$-generalized Fourier kernel and the Poisson kernel for the unit ball by introducing an auxiliary variable in the  kernel and using the Laplace transform. Then we give the explicit formula for the kernel when $a=\frac{2}{n}$ and $m$ even.

The kernel $K^{m}_{a}(x,y)=B_{0, a}(x,y)$  for $a>0$ is given in Theorem \ref{the1} (see also \cite{Rad2}, \cite{SKO}) \[K_{a}^{m}(x,y)=a^{2\lambda/a}\Gamma\biggl(\frac{2\lambda+a}{a}\biggr)\sum_{j=0}^{\infty}e^{-\frac{i\pi j}{a}}\frac{\lambda+j}{\lambda}z^{-\lambda}J_{\frac{2(j+\lambda)}{a}}\biggl(\frac{2}{a}z^{a/2}\biggr)C^{\lambda}_{j}(\xi)\] with $\lambda=(m-2)/2$, $z=|x||y|$,  $\xi=\langle x,y\rangle/z$ and $C_{j}^{\lambda}(\xi)$  the Gegenbauer polynomial.
We introduce an auxiliary variable $t$ in the kernel  as follows \begin{eqnarray}\label{ke1}K_{a}^{m}(x,y,t)=a^{2\lambda/a}\Gamma\biggl(\frac{2\lambda+a}{a}\biggr)\sum_{j=0}^{\infty}e^{-\frac{i\pi j}{a}}\frac{\lambda+j}{\lambda}z^{-\lambda}J_{\frac{2(j+\lambda)}{a}}\biggl(\frac{2}{a}z^{a/2} t\biggr)C^{\lambda}_{j}(\xi).\end{eqnarray}
Before we take the Laplace transform, we give the expansion of the Poisson kernel in terms of  Gegenbauer polynomials.
\begin{theorem} \label{pe}\cite{DX}
For $x,y \in \mathbb{R}^{m}$ and $ |y|\le |x|=1$, the Poisson kernel for the unit ball is
\begin{eqnarray*} P(x,y)=\frac{1-|y|^2}{|x-y|^{m}}=\frac{1-|y|^2}{(1-2\xi|y|+|y|^2)^{m/2}}=\sum_{j=0}^{\infty}\frac{j+m/2-1}{m/2-1}C_{j}^{m/2-1}(\xi)|y|^{j}, \quad \xi=\langle x, \frac{y}{|y|}\rangle. \end{eqnarray*}
\end{theorem}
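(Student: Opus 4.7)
The plan is to prove Theorem \ref{pe} in two steps: first to verify the middle equality, which is a purely algebraic reformulation of the Poisson kernel; second to derive the Gegenbauer expansion from the generating function for Gegenbauer polynomials via a simple differentiation trick.

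For the first equality, I would expand $|x-y|^{2}=|x|^{2}-2\langle x,y\rangle+|y|^{2}$ and use the hypotheses $|x|=1$ and $\langle x,y\rangle = \xi |y|$ (from the definition $\xi=\langle x,y/|y|\rangle$) to rewrite it as $1-2\xi|y|+|y|^{2}$. Raising to the power $m/2$ then yields the desired denominator. This step is routine.

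For the expansion, I would set $\lambda=m/2-1$ and start from the classical generating function
\[
G(\xi,r)\;:=\;(1-2\xi r+r^{2})^{-\lambda}\;=\;\sum_{j=0}^{\infty}C_{j}^{\lambda}(\xi)\,r^{j},\qquad |r|<1,\ \xi\in[-1,1].
\]
A direct differentiation gives $\partial_{r}G(\xi,r)=2\lambda(\xi-r)/(1-2\xi r+r^{2})^{\lambda+1}$, and forming the combination
\[
\lambda G(\xi,r)+r\,\partial_{r}G(\xi,r)\;=\;\frac{\lambda(1-2\xi r+r^{2})+2\lambda r(\xi-r)}{(1-2\xi r+r^{2})^{\lambda+1}}\;=\;\frac{\lambda(1-r^{2})}{(1-2\xi r+r^{2})^{\lambda+1}}
\]
produces exactly the Poisson-kernel shape in the numerator. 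Dividing by $\lambda$ and reading off the coefficients of $r^{j}$ on both sides gives
\[
\frac{1-r^{2}}{(1-2\xi r+r^{2})^{\lambda+1}}\;=\;\sum_{j=0}^{\infty}\frac{\lambda+j}{\lambda}\,C_{j}^{\lambda}(\xi)\,r^{j},
\]
and setting $r=|y|$, $\lambda=m/2-1$ yields the claim.

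There is no serious obstacle here; the only care point is termwise manipulation of the series, which is justified because the generating function converges absolutely and uniformly on compact subsets of $\{|r|<1\}\times[-1,1]$, so the derivative can be taken termwise. The argument tacitly assumes $m\ge 3$ (so $\lambda\neq 0$); the cases $m=1,2$ would require a separate treatment using the Chebyshev-type limit of Gegenbauer polynomials, but these are not needed later in the paper since $\lambda_{\kappa}=\langle\kappa\rangle+(m-2)/2$ will play the role of $\lambda$ and is generically nonzero.
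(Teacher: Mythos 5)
Your argument is correct. Note, however, that the paper offers no proof of Theorem \ref{pe} at all: it is imported verbatim from \cite{DX}, and the only justification supplied in the text concerns the subsequent extension to general $\lambda>0$ and the analytic continuation from (\ref{ac1}) to (\ref{ac2}), which is handled there via the coefficient bound $|C_{j}^{\lambda}(\xi)|\le C_{j}^{\lambda}(1)=(2\lambda)_{j}/j!$. Your generating-function derivation is the standard one and is a genuine addition rather than a variant: the combination $\lambda G+r\,\partial_{r}G$ is computed correctly on both sides, the first equality $|x-y|^{2}=1-2\xi|y|+|y|^{2}$ is immediate from $|x|=1$, and as a bonus your computation proves the identity for arbitrary $\lambda>0$, i.e.\ it establishes (\ref{ac1}) directly rather than only the case $\lambda=m/2-1$. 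Your convergence remark (uniform convergence of the generating series on compacts of $|r|<1$ justifying termwise differentiation) is adequate and is essentially equivalent to the paper's bound-based argument. The caveat about $\lambda=0$ is well taken and worth keeping in mind: the case $m=2$, $\kappa=0$ does occur later (e.g.\ Theorem \ref{co} uses $\lambda=(m-2)/2=0$), where the ratio $\frac{j+\lambda}{\lambda}C_{j}^{\lambda}$ must be read as its Chebyshev limit $2T_{j}(\xi)$ for $j\ge 1$; the paper glosses over this as well.
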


        This result can be extended for $\lambda>0,$ we have
\begin{eqnarray} \label{ac1}\frac{1-|y|^2}{(1-2\xi|y|+|y|^2)^{\lambda+1}}=\sum_{j=0}^{\infty}\frac{j+\lambda}{\lambda}C_{j}^{\lambda}(\xi)|y|^{j}. \end{eqnarray} It is still valid for $z\in \mathbb{C}$, $|z|<1$ and $|\xi|<1$, (see \cite{olf})
\begin{eqnarray}\label{ac2}\frac{1-z^2}{(1-2\xi z+z^2)^{\lambda+1}}=\sum_{j=0}^{\infty}\frac{j+\lambda}{\lambda}C_{j}^{\lambda}(\xi)z^{j}. \end{eqnarray}
To establish the validity of the analytic continuation of (\ref{ac1}) to (\ref{ac2}), note that the left-hand side of (\ref{ac2}) is analytic in $z$  in any disk centered at the origin of the complex plane that does not contain any zero of the denominator, hence analytic in $0\le |z|<1$. By the estimate \[|C_{j}^{\lambda}(\xi)|\le C_{j}^{\lambda}(1)=\frac{(2\lambda)_{j}}{j!},\]
the right-hand side of (\ref{ac2}) will certainly converge to an analytic continuation of that of (\ref{ac1}) for all $z$ satisfying $|z|\le |y|<1$, hence for the whole unit disk.

By Theorem \ref{pe} and the  formula from \cite{E2} \begin{eqnarray}\label{l1} \mathcal{L}(J_{\nu}(bt))=\frac{1}{\sqrt{s^2+b^2}} \biggl(\frac{b}{s+\sqrt{s^2+b^2}}\biggr)^{\nu}, \qquad \mbox{Re}\,\nu >-1,  \mbox{Re} \, s>|\mbox{Im}\, b|
,\end{eqnarray} we take the Laplace transform with respect to $t$ in (\ref{ke1}).  With $u_{R}=e^{\frac{-i\pi}{a}} (\frac{2z^{a/2}}{aR})^{2/a}$, $r=\sqrt{s^{2}+(\frac{2}{a}z^{a/2})^{2}}$, $R=s+r$, $\lambda=(m-2)/2$, $z=|x||y|$ and $\xi=\langle x,y\rangle/z$, for ${\rm Re} \, s$ big enough, we obtain
\begin{eqnarray} \label{rl}
&&\mathcal{L}(K_{a}^{m}(x,y,t))\nonumber\\&=&2^{2\lambda/a}\Gamma\biggl(\frac{2\lambda+a}{a}\biggr)\frac{1}{r}\biggl(\frac{1}{R}\biggr)^{2\lambda/a}\frac{1-u_{R}^{2}}{(1-2\xi u_{R}+u_{R}^2)^{\lambda+1}} \nonumber\\
&=&2^{2\lambda/a}\Gamma\biggl(\frac{2\lambda+a}{a}\biggr)\frac{1}{r}\frac{R^{2/a}-\frac{e^{-2i\pi/a}(2/a)^{4/a}z^{2}}{R^{2/a}}}{(R^{2/a}-2\xi e^{-i\pi/a}(2/a)^{2/a}z+\frac{e^{-2i\pi/a}(2/a)^{4/a}z^{2}}{R^{2/a}} )^{\lambda+1}}
\nonumber
\\&=&2^{2\lambda/a}\Gamma\biggl(\frac{2\lambda+a}{a}\biggr)\frac{1}{r}\frac{(s+r)^{2/a}-e^{-2i\pi/a}(r-s)^{2/a}}{((s+r)^{2/a}-2\xi e^{-i\pi/a}(2/a)^{2/a}z+e^{-2i\pi/a}(r-s)^{2/a})^{\lambda+1}}.
\end{eqnarray}

The validity of  transforming term by term in (\ref{ke1}) is guaranteed by the following theorem.
\begin{theorem} \cite{DG}
Let the function $F(s)$ be represented by a series of $\mathcal{L}$-transforms \[F(s)=\sum_{v=0}^{\infty}F_{v}(s), \quad F_{v}(s)=\mathcal{L}(f_{v}(t)),\]
where all integrals
\[\mathcal{L}(f_{v})=\int_{0}^{\infty}e^{-st}f_{v}(t)dt=F_{v}(s), \quad(v=0,1,\cdots)\]
converge in a common half-plane ${\rm Re}\,s \ge x_{0}$. Moreover, we require that
the integrals \[\mathcal{L}(|f_{v}|)=\int_{0}^{\infty}e^{-st}|f_{v}(t)|dt=G_{v}, \quad(v=0,1,\cdots)\]
and the series \[\sum_{v=0}^{\infty}G_{v}(x_{0})\]converge which implies that $\sum_{v=0}^{\infty}F_{v}(s)$ converges absolutely and uniformly in ${\rm Re}  \,s\ge x_{0}$.
Then $\sum_{v=0}^{\infty}f_{v}(t)$ converges, absolutely, towards a function $f(t)$ for almost all $t\ge 0$; this f(t) is the original function of $F(s)$;
\[\mathcal{L}\biggl(\sum_{v=0}^{\infty}f_{v}(t)\biggr)=\sum_{v=0}^{\infty}F_{v}(s).\]
\end{theorem}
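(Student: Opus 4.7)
The plan is to derive everything from the Fubini--Tonelli theorem applied to the product measure space $[0,\infty)\times\mathbb{N}$, equipped with Lebesgue measure on the first factor and counting measure on the second. The single strong hypothesis $\sum_{v}G_{v}(x_{0})<\infty$ provides exactly the absolute integrability needed to interchange sums and integrals; from it I will extract a.e.\ convergence of $\sum_{v}f_{v}(t)$, identification of its sum with the original function of $F(s)$, and the uniform/absolute convergence of $\sum_{v}F_{v}(s)$ on the closed half-plane.

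Step one is to apply Tonelli to the nonnegative measurable function $(t,v)\mapsto e^{-x_{0}t}|f_{v}(t)|$. By hypothesis
\[
\sum_{v=0}^{\infty}\int_{0}^{\infty}e^{-x_{0}t}|f_{v}(t)|\,dt=\sum_{v=0}^{\infty}G_{v}(x_{0})<\infty,
\]
and Tonelli rewrites the left-hand side as $\int_{0}^{\infty}e^{-x_{0}t}\bigl(\sum_{v}|f_{v}(t)|\bigr)\,dt$. Since this integral is finite and $e^{-x_{0}t}>0$, the integrand forces $\sum_{v}|f_{v}(t)|<\infty$ for almost every $t\geq 0$. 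Hence the series $\sum_{v}f_{v}(t)$ converges absolutely a.e.\ to a (measurable) function $f(t)$; I would simply set $f(t)=0$ on the exceptional null set.

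Step two is to apply Fubini on the same product space, for any $s$ with ${\rm Re}\,s\geq x_{0}$, using the domination
\[
\sum_{v}\int_{0}^{\infty}|e^{-st}f_{v}(t)|\,dt\leq\sum_{v}\int_{0}^{\infty}e^{-x_{0}t}|f_{v}(t)|\,dt=\sum_{v}G_{v}(x_{0})<\infty.
\]
Fubini then yields $\int_{0}^{\infty}e^{-st}f(t)\,dt=\sum_{v}\int_{0}^{\infty}e^{-st}f_{v}(t)\,dt=\sum_{v}F_{v}(s)=F(s)$, identifying $f$ as the original function of $F$. The uniform and absolute convergence of $\sum_{v}F_{v}(s)$ on ${\rm Re}\,s\geq x_{0}$ follows from the Weierstrass $M$-test together with the bound $|F_{v}(s)|\leq G_{v}(x_{0})$, itself an immediate consequence of $|e^{-st}|=e^{-({\rm Re}\,s)t}\leq e^{-x_{0}t}$ on $t\geq 0$.

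No step presents a genuine obstacle; the result is essentially a direct corollary of classical measure theory. The only point requiring care is the genuinely a.e.\ nature of the pointwise convergence of $\sum_{v}f_{v}(t)$: the limit $f$ is determined only up to a null set, which is consistent with the fact that original functions of Laplace transforms are never unique as pointwise functions. Redefining $f$ arbitrarily on the null set where absolute convergence fails does not affect any of the integrals above, so the statement is complete as written.
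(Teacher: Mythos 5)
Your proof is correct. Note that the paper itself offers no proof of this statement: it is quoted verbatim from Doetsch's monograph \cite{DG} purely as a tool to justify term-by-term Laplace transformation of the kernel series, so there is no internal argument to compare against. Your Fubini--Tonelli argument on $[0,\infty)\times\mathbb{N}$ is the standard proof and is essentially the one found in \cite{DG} (there phrased via B.~Levi's monotone convergence theorem rather than Tonelli, but the content is identical): the single hypothesis $\sum_{v}G_{v}(x_{0})<\infty$ yields a.e.\ absolute convergence of $\sum_{v}f_{v}(t)$, the domination $|e^{-st}f_{v}(t)|\le e^{-x_{0}t}|f_{v}(t)|$ for ${\rm Re}\,s\ge x_{0}$ licenses the interchange $\mathcal{L}(\sum_{v}f_{v})=\sum_{v}F_{v}$, and the bound $|F_{v}(s)|\le G_{v}(x_{0})$ gives uniform absolute convergence by the Weierstrass $M$-test. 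Your closing remark that $f$ is determined only up to a null set, and that the hypothesis on convergence of the individual $\mathcal{L}(f_{v})$ in ${\rm Re}\,s\ge x_{0}$ is in fact subsumed by the absolute-convergence hypothesis at $x_{0}$, are both accurate observations; nothing is missing.
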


Hence we can summarize our results as follows,
\begin{theorem}
\label{laplradial}
The kernel of the deformed Fourier transform in the Laplace domain is
\begin{eqnarray}\label{rl2}
&&\mathcal{L}(K_{a}^{m}(x,y,t))\nonumber
\nonumber
\\&=&2^{2\lambda/a}\Gamma\biggl(\frac{2\lambda+a}{a}\biggr)\frac{1}{r}\frac{(s+r)^{2/a}-e^{-2i\pi/a}(r-s)^{2/a}}{((s+r)^{2/a}-2\xi e^{-i\pi/a}(2/a)^{2/a}z+e^{-2i\pi/a}(r-s)^{2/a})^{\lambda+1}}
\end{eqnarray}
where $r=\sqrt{s^{2}+(\frac{2}{a}z^{a/2})^{2}}.$
\end{theorem}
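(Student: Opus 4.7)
The plan is to compute the Laplace transform in $t$ of the series (\ref{ke1}) term by term, apply the known Laplace transform of the Bessel function from (\ref{l1}) to each summand, and then recognize the resulting series as the analytically continued Gegenbauer expansion of the Poisson kernel from (\ref{ac2}). The final algebraic form in (\ref{rl2}) will come out after clearing powers of $R=s+r$ from numerator and denominator.

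First I would apply (\ref{l1}) to the $j$-th summand with $\nu=2(j+\lambda)/a$ and $b=(2/a)z^{a/2}$, obtaining
\[
\mathcal{L}\!\left(J_{2(j+\lambda)/a}\bigl(\tfrac{2}{a}z^{a/2}t\bigr)\right)(s)=\frac{1}{r}\left(\frac{(2/a)z^{a/2}}{R}\right)^{2(j+\lambda)/a},
\]
where $r=\sqrt{s^{2}+(2z^{a/2}/a)^{2}}$ and $R=s+r$. Pulling out the $j$-independent factor $\frac{1}{r}\bigl((2/a)z^{a/2}/R\bigr)^{2\lambda/a}$ and collecting the remaining $j$-dependent quantities into the variable $u_{R}=e^{-i\pi/a}\bigl((2/a)z^{a/2}/R\bigr)^{2/a}$, the termwise Laplace-transformed series becomes
\[
\frac{1}{r}\left(\frac{(2/a)z^{a/2}}{R}\right)^{2\lambda/a}\sum_{j=0}^{\infty}\frac{\lambda+j}{\lambda}\,C_{j}^{\lambda}(\xi)\,u_{R}^{\,j}.
\]
The Gegenbauer series identity (\ref{ac2}), valid for $|u_{R}|<1$ and $|\xi|<1$, then collapses the sum to $(1-u_{R}^{2})/(1-2\xi u_{R}+u_{R}^{2})^{\lambda+1}$.

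The main obstacle is justifying the interchange of summation and Laplace transform, and I would devote a step to verifying the hypotheses of the theorem cited from \cite{DG} (which is exactly what the paper invokes between (\ref{rl}) and the final statement). For this I would use the standard bound $|C_{j}^{\lambda}(\xi)|\le (2\lambda)_{j}/j!$ together with the elementary estimate $|J_{\nu}(bt)|\le (bt/2)^{\nu}/\Gamma(\nu+1)$ for large $\nu$; inserting these gives $G_{j}:=\mathcal{L}(|f_{j}|)(x_{0})$ bounded by a convergent geometric-type series as soon as $\operatorname{Re} s$ is sufficiently large, so the hypotheses are met in a half-plane and termwise transformation is legitimate there. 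The identity then extends by analytic continuation in $s$ to the stated domain.

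Finally, a short algebraic simplification finishes the theorem: multiplying numerator and denominator of the fraction $(1-u_{R}^{2})/(1-2\xi u_{R}+u_{R}^{2})^{\lambda+1}$ by $R^{(2\lambda+2)/a}$, and using $(2/a)^{2/a}z/R^{2/a}\cdot R^{2/a}=(2/a)^{2/a}z$ together with $(r-s)(s+r)=(2z^{a/2}/a)^{2}$ to rewrite $(2/a)^{4/a}z^{2}/R^{2/a}=(r-s)^{2/a}$, absorbs the leading prefactor into the two powers $(s+r)^{2/a}$ and $e^{-2i\pi/a}(r-s)^{2/a}$ appearing in numerator and denominator. Combining with the global constant $a^{2\lambda/a}\Gamma((2\lambda+a)/a)=2^{2\lambda/a}\Gamma((2\lambda+a)/a)\cdot(a/2)^{2\lambda/a}\cdot$ the absorbed factor, one arrives at (\ref{rl2}) exactly as displayed.
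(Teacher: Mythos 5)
Your proposal is correct and follows essentially the same route as the paper: termwise Laplace transform via the tabulated formula for $\mathcal{L}(J_{\nu}(bt))$, recognition of the resulting series as the analytically continued Gegenbauer generating function $(1-u_{R}^{2})/(1-2\xi u_{R}+u_{R}^{2})^{\lambda+1}$, and the algebraic rewriting using $(r-s)(r+s)=(2z^{a/2}/a)^{2}$. The only difference is that you actually verify the hypotheses of the Doetsch interchange theorem with the bounds $|C_{j}^{\lambda}(\xi)|\le (2\lambda)_{j}/j!$ and $|J_{\nu}(bt)|\le (bt/2)^{\nu}/\Gamma(\nu+1)$, whereas the paper merely cites the theorem; this is a welcome extra detail, not a different method.
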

By direct computation, we have the following simpler expression when $m > 2$.
\begin{corollary} When $\lambda>0$, the kernel of the deformed Fourier transform in the Laplace domain is
\begin{eqnarray*}
&&\mathcal{L}(K_{a}^{m}(x,y,t))\nonumber\nonumber
\\&=&-2^{2\lambda/a}\Gamma\biggl(\frac{2\lambda}{a}\biggr)\frac{d}{d s}\biggl(\frac{1}{((s+r)^{2/a}-2\xi e^{-i\pi/a}(2/a)^{2/a}z+e^{-2i\pi/a}(r-s)^{2/a})^{\lambda}}\biggr)
\end{eqnarray*}
where $r=\sqrt{s^{2}+(\frac{2}{a}z^{a/2})^{2}}.$
\end{corollary}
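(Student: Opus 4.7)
The plan is to recognize the corollary as nothing more than rewriting the expression in Theorem \ref{laplradial} as a total $s$-derivative. Introduce the shorthand
\[
D(s) := (s+r)^{2/a} - 2\xi\, e^{-i\pi/a}(2/a)^{2/a}z + e^{-2i\pi/a}(r-s)^{2/a},\qquad r=\sqrt{s^2+(2z^{a/2}/a)^2},
\]
so that Theorem \ref{laplradial} reads
\[
\mathcal{L}(K_a^m(x,y,t)) = 2^{2\lambda/a}\,\Gamma\!\left(\tfrac{2\lambda+a}{a}\right)\frac{1}{r}\cdot\frac{(s+r)^{2/a}-e^{-2i\pi/a}(r-s)^{2/a}}{D(s)^{\lambda+1}}.
\]
The target identity is
\[
-\frac{d}{ds}\bigl(D(s)^{-\lambda}\bigr) = \lambda\, D(s)^{-\lambda-1}\,D'(s),
\]
so everything reduces to showing that $\frac{1}{r}\bigl[(s+r)^{2/a}-e^{-2i\pi/a}(r-s)^{2/a}\bigr]$ is, up to the constant $\frac{a}{2}$, exactly $D'(s)$, and then using the functional equation of $\Gamma$ to absorb the factor of $\lambda$.

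The main (and only) calculation is the chain-rule evaluation of $D'(s)$. Since $r' = s/r$, one has $\frac{d}{ds}(s+r) = (s+r)/r$ and $\frac{d}{ds}(r-s) = -(r-s)/r$, hence
\[
\frac{d}{ds}(s+r)^{2/a} = \frac{2}{a}\cdot\frac{(s+r)^{2/a}}{r},\qquad \frac{d}{ds}(r-s)^{2/a} = -\frac{2}{a}\cdot\frac{(r-s)^{2/a}}{r}.
\]
The middle term of $D(s)$ is constant in $s$, so
\[
D'(s) = \frac{2}{a r}\Bigl[(s+r)^{2/a} - e^{-2i\pi/a}(r-s)^{2/a}\Bigr].
\]
Substituting this into the target identity yields $-\frac{d}{ds}\bigl(D(s)^{-\lambda}\bigr) = \frac{2\lambda}{a r}\bigl[(s+r)^{2/a} - e^{-2i\pi/a}(r-s)^{2/a}\bigr]D(s)^{-\lambda-1}$, which is precisely the fraction appearing in Theorem \ref{laplradial} multiplied by $\frac{2\lambda}{a}$.

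Finally, applying the functional equation $\Gamma\!\left(\tfrac{2\lambda+a}{a}\right) = \tfrac{2\lambda}{a}\,\Gamma\!\left(\tfrac{2\lambda}{a}\right)$ (valid because $\lambda>0$ keeps $2\lambda/a$ in the domain where $\Gamma$ is finite and nonzero), the prefactor becomes $2^{2\lambda/a}\,\Gamma\!\left(\tfrac{2\lambda}{a}\right)\cdot\tfrac{2\lambda}{a}$, and the factor $\tfrac{2\lambda}{a}/r$ together with the bracketed numerator recombines into $-\frac{d}{ds}D(s)^{-\lambda}$, giving the claimed expression. No serious obstacle is expected: the argument is a one-line derivative check plus a shift of the Gamma argument. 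The only point to note is the assumption $\lambda>0$, which is what allows both the Gamma shift and the representation as an $s$-derivative of a nonconstant power of $D$ (at $\lambda=0$ the right-hand side would vanish identically and the identity degenerates).
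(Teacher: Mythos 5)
Your proposal is correct and is exactly the "direct computation" the paper alludes to: differentiate the denominator base via $r'=s/r$ to identify the numerator of Theorem \ref{laplradial} as $\frac{a}{2}D'(s)$, then shift the Gamma argument using $\Gamma(\frac{2\lambda}{a}+1)=\frac{2\lambda}{a}\Gamma(\frac{2\lambda}{a})$. This is the same mechanism the paper later makes explicit for $a=2/n$ in equations (\ref{dr1}) and (\ref{dl1}).
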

Let us now look at a few special cases.
When $a=1$, (\ref{rl2}) reduces to
\[\mathcal{L}(K_{1}^{m}(x,y,t))=\Gamma(2\lambda+1)\frac{s}{(s^2+2z+2\xi z)^{\lambda+1}}.\]
Using the formula in \cite{E2} \begin{eqnarray}\label{nf1}\mathcal{L}^{-1}(2^{\nu+1}\pi^{-1/2}\Gamma(\nu+3/2)a^{\nu}\sqrt{s^2+a^2}^{-2\nu-3}s )=t^{\nu+1}J_{\nu}(at),\qquad \mbox{Re}\,\nu >-1,  \mbox{Re}\, s>|\mbox{Im} \, a| \end{eqnarray} and then setting $t=1$ in $K_{1}^{m}(x,y,t)$,
we reobtain the kernel \[K_{1}^{m}(x,y)=\Gamma(\lambda+1/2)\tilde{J}_{\frac{m-3}{2}}(\sqrt{2(|x||y|+\langle x,y\rangle)}) \]
with $\tilde{J}_{\nu}(z)=J_{\nu}(z)(z/2)^{-\nu}$, see \cite{Rad3}.

When $a=2$, (\ref{rl2}) reduces to \[ \mathcal{L}(K_{2}^{m}(x,y,t))=\Gamma(\lambda+1)\frac{1}{(s+i\xi z)^{\lambda+1}}.
\]
By the inverse transform formula in \cite{E2} \begin{eqnarray*} \mathcal{L}\biggl(\frac{t^{k-1}e^{-\alpha t}}{\Gamma(k)}\biggr)=\frac{1}{(s+\alpha)^{k}} \qquad k>0,\end{eqnarray*}
and then putting $t=1$ in $K_{2}^{m}(x,y,t)$,
we get the classical Fourier kernel \[K_{2}^{m}(x,y)=e^{-i\langle x, y\rangle}.\]

We are interested in the case when $a=\frac{2}{n}$, because it has a close relationship with the Dunkl kernel and Dunkl Bessel function associated with dihedral groups which we will discuss in Section 4. When $a=\frac{2}{n}$,  the Fourier kernel in the Laplace domain is  \begin{eqnarray} \label{rl3} \mathcal{L}(K_{\frac{2}{n}}^{m}(x,y,t))= \Gamma(n\lambda+1)\frac{Q_{n-1}(s)}{P_{n}(s)^{\lambda+1}} ,\end{eqnarray}
with \begin{eqnarray*}Q_{n-1}(s)&=&\frac{(s+r)^{n}-e^{-in\pi}(r-s)^{n}}{2^{n}r},\\ P_{n}(s)&=&\frac{(s+r)^{n}-2\xi e^{-in\pi/2}(n)^{n}z+e^{-in\pi}(r-s)^{n}}{2^{n}}.\end{eqnarray*}
By direct computation, we have
 \begin{eqnarray}\label{dr1}\frac{d }{d s}P_{n}(s)=nQ_{n-1}(s),\end{eqnarray}
and \begin{eqnarray}\label{dl1}  \mathcal{L}(K_{\frac{2}{n}}^{m}(x,y,t))= \Gamma(n\lambda+1)\frac{\frac{d }{d s}P_{n}(s)}{n(P_{n}(s))^{\lambda+1}}
=-\Gamma(n\lambda)\frac{d}{ds}\frac{1}{P_{n}(s)^{\lambda}} ,\end{eqnarray}
when $\lambda>0$.

We can investigate both functions $Q_{n-1}(s)$ and $P_{n}(s)$ in more detail. This is done in the following lemma.
\begin{lemma}\label{lem1} The function $P_{n}(s)$ is a polynomial of degree $n$ in $s$ with the factorization
\[P_{n}(s)=\prod_{l=0}^{n-1}\biggl(s+inz^{1/n} \cos\biggl(\frac{q+2\pi l}{n}\biggr)\biggr),
\]
where $q=\arccos(\xi)$, $\xi=\frac{\langle x, y\rangle}{|x||y|}$.
 The function $Q_{n-1}(s)$ is a polynomial of degree $n-1$ in $s$. When $n$ is odd, $Q_{n-1}(s)$ has the factorization
\[Q_{n-1}(s)=\prod_{l=1}^{n-1}\biggl(s-inz^{1/n}\cos\biggl(\frac{l\pi}{n}\biggr)\biggl).\]
When $n$ is even, $Q_{n-1}(s)$ has the factorization\[Q_{n-1}(s)=\prod_{l=0,l\neq\frac{n}{2}}^{n-1}\biggl(s-inz^{1/n}\sin\biggl(\frac{l\pi}{n}\biggr)\biggl).\]
\end{lemma}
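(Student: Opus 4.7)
The plan is to parametrize $s$ and $r$ trigonometrically so that the binomials $s \pm r$ collapse into simple exponentials, after which the roots of both polynomials can be read off from elementary trigonometric equations. I would start by simplifying the formulas: since $e^{-in\pi} = (-1)^n$ and $(r-s)^n = (-1)^n(s-r)^n$, the combination $e^{-in\pi}(r-s)^n$ reduces to $(s-r)^n$, so
\[
Q_{n-1}(s) = \frac{(s+r)^n - (s-r)^n}{2^n r}, \qquad P_n(s) = \frac{(s+r)^n + (s-r)^n - 2\xi e^{-in\pi/2} n^n z}{2^n}.
\]
A binomial expansion shows that $(s+r)^n - (s-r)^n$ contains only odd powers of $r$, and together with $r^2 = s^2 + n^2 z^{2/n}$ this yields a polynomial of degree $n-1$ in $s$ after division by $r$. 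Similarly $(s+r)^n + (s-r)^n$ contains only even powers of $r$, giving a polynomial of degree $n$. Both are monic by the identity $\sum_{k \text{ odd}} \binom{n}{k} = \sum_{k \text{ even}} \binom{n}{k} = 2^{n-1}$, which is what the claimed product factorizations require at the end.

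Next I would introduce the parametrization $s = -inz^{1/n}\cos\phi$, $r = nz^{1/n}\sin\phi$, which is consistent with $r^2 - s^2 = n^2 z^{2/n}$. A short computation gives $s + r = -inz^{1/n} e^{i\phi}$ and $s - r = -inz^{1/n} e^{-i\phi}$, so $(s \pm r)^n = e^{-in\pi/2} n^n z \, e^{\pm in\phi}$. Substituting back collapses the messy expressions to
\[
P_n(s) = \frac{e^{-in\pi/2} n^n z}{2^{n-1}}\bigl(\cos(n\phi) - \xi\bigr), \qquad Q_{n-1}(s) = \frac{i e^{-in\pi/2} n^{n-1} z^{1-1/n}}{2^{n-1}} \cdot \frac{\sin(n\phi)}{\sin\phi}.
\]

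From the first identity, the roots of $P_n$ satisfy $\cos(n\phi) = \cos q$, giving $\phi = (q + 2\pi l)/n$ for $l = 0, 1, \ldots, n-1$; the alternative branch $n\phi = -q + 2\pi l$ produces the same $s$-values by the evenness of $\cos\phi$, so these $n$ solutions already exhaust the roots. Monicity and the degree count then force $P_n(s) = \prod_{l=0}^{n-1}\bigl(s + inz^{1/n}\cos((q+2\pi l)/n)\bigr)$. For $Q_{n-1}$, the roots come from $\sin(n\phi) = 0$ with $\sin\phi \neq 0$, i.e., $\phi = k\pi/n$ for $k = 1, \ldots, n-1$, producing $s = -inz^{1/n}\cos(k\pi/n)$. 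Reindexing $l = n-k$ (a bijection on $\{1,\ldots,n-1\}$) together with $\cos((n-l)\pi/n) = -\cos(l\pi/n)$ delivers the odd-$n$ formula. For even $n$, I would repeat the derivation with the complementary parametrization $s = inz^{1/n}\sin\phi$, $r = nz^{1/n}\cos\phi$; then $Q_{n-1}$ is proportional to $\sin(n\phi)/\cos\phi$, whose zeros at $\phi = l\pi/n$ (with $\cos(l\pi/n) \neq 0$, forcing $l \neq n/2$) yield the claimed sine-form factorization.

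The main technical obstacle I anticipate is keeping track of the combinatorics of the parametrization: the map $\phi \mapsto s$ is two-to-one, so I must be careful to select an index range that produces each root with the correct multiplicity. The safety check is provided by monicity together with the compatibility relation $dP_n/ds = nQ_{n-1}$ from (\ref{dr1}), which links the two factorizations and ensures that the derivative of the $P_n$-product agrees with $n$ times the $Q_{n-1}$-product.
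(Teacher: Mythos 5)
Your proposal is correct and follows essentially the same route as the paper: establish the degree and monicity of $2^nP_n$ and $2^nQ_{n-1}$ via the binomial expansion in even/odd powers of $r$, identify the roots by substituting values for which $s\pm r$ become multiples of $e^{\pm i\phi}$, and conclude from the monic degree count. Your global parametrization $s=-inz^{1/n}\cos\phi$, $r=nz^{1/n}\sin\phi$ is just a tidier packaging of the paper's root-by-root verification (and you rightly flag, as the paper does not, that for even $n$ the values $\sin(l\pi/n)$ coincide in pairs, so the factorization of $Q_{n-1}$ really needs the multiplicity/derivative check you mention).
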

\begin{proof}
\begin{enumerate}
  \item We show that $P_{n}(s)$ is a polynomial of degree $n$ in $s$,
  \begin{eqnarray*}2^{n}P_{n}(s)&=&(s+r)^{n}-2\xi e^{-in\pi/2}(n)^{n}z+e^{-in\pi}(r-s)^{n}\\&=& (s+r)^{n}+(-1)^{n}(r-s)^{n}-2\xi e^{-in\pi/2}(n)^{n}z\\&=&\sum_{k=0}^{n}\binom{n}{k}s^{n-k}r^{k}+(-1)^{n}\sum_{k=0}^{n}\binom{n}{k}(-1)^{n-k}s^{n-k}r^{k}-2\xi e^{-in\pi/2}(n)^{n}z\\&=&\biggl(\sum_{k=0}^{n}\binom{n}{k}s^{n-k}r^{k}(1+(-1)^{k})\biggr)-2\xi e^{-in\pi/2}(n)^{n}z
  \\&=&2\sum_{k=0}^{ \lfloor n/2  \rfloor}\binom{n}{2k}s^{n-2k}(s^2+(nz^{1/n})^2)^{k}-2\xi e^{-in\pi/2}(n)^{n}z.
  \end{eqnarray*} Hence $2^{n}P_{n}(s)$ is a polynomial of degree $n$ in $s$. The coefficient of $s^{n}$ is $2\sum_{k=0}^{ \lfloor n/2  \rfloor}\binom{n}{2k}=2^{n}.$
  \item We verify $2^{n}P_{n}(s_{l})=0$ with $s_{l}=-inz^{1/n} \cos(\frac{q+2\pi l}{n}),\quad l=0,\cdots, n-1$. Denote $\xi=\cos (q)=\frac{e^{iq}+e^{-iq}}{2}$.  When $\sin(\frac{q+2\pi l}{n})\ge 0$, we have
      \begin{eqnarray*}2^{n}P_{n}(s_{l})&=&(-inz^{1/n})^{n}\biggl[\biggl(\cos\biggl(\frac{q+2\pi l}{n}\biggr)+i\sin\biggl(\frac{q+2\pi l}{n}\biggr)\biggr)^{n}-2\xi\\&&+\biggl(\cos\biggl(\frac{q+2\pi l}{n}\biggr)-i\sin\biggl(\frac{q+2\pi l}{n}\biggr)\biggr)^{n}\biggr]\\&=&(-inz^{1/n})^{n}\biggl(e^{iq}-2\biggl(\frac{e^{iq}+e^{-iq}}{2}\biggr)+e^{-iq}\biggr)\\&=&0.
      \end{eqnarray*} Similarly,  we have $2^{n}P_{n}(s_{l})=0$ when $\sin(\frac{q+2\pi l}{n})<0$. Hence, $s_{l}, l=0,\cdots, n-1$ are all roots of $2^{n}P_{n}$ and we get the factorization
  \[P_{n}(s)=\prod_{l=0}^{n-1}\biggl(s+inz^{1/n} \cos\biggl(\frac{q+2\pi l}{n}\biggr)\biggr)
.\]
  \item For $2^{n}Q_{n-1}(s)$, we have \begin{eqnarray*} 2^{n}Q_{n-1}(s)&=&\frac{(s+r)^{n}-e^{-in\pi}(r-s)^{n}}{r}\\
  &=&\frac{1}{r}((s+r)^{n}-(-1)^{n}(r-s)^{n})
  \\&=&\frac{1}{r}\sum_{k=0}^{n}\binom{n}{k}s^{n-k}r^{k}(1-(-1)^{n}(-1)^{n-k})
  \\&=&\frac{2}{r}\sum_{k=0}^{\lfloor n/2 \rfloor }\binom{n}{2k+1}s^{n-2k-1}r^{2k+1}
  \\&=&2\sum_{k=0}^{\lfloor n/2 \rfloor }\binom{n}{2k+1}s^{n-2k-1}(s^{2}+(nz^{1/n})^{2})^{k}.
   \end{eqnarray*} So $2^{n}Q_{n-1}(s)$ is a polynomial of degree $n-1$ in $s$.
   \item
When $n$ is odd,  $s_{l}=inz^{1/n}\cos(\frac{l\pi}{n})=inz^{1/n}\sin(\frac{\pi}{2}+\frac{l\pi}{n})$, $l=0,\cdots, n-1$ are $n$ roots of $(2^{n}rQ_{n-1})(s)=0$. Indeed, we have $r_{l}=\sqrt{s_{l}^2+(nz^{1/n})^2}=-nz^{1/n}\cos(\frac{\pi}{2}+\frac{l\pi}{n})$ and
\begin{eqnarray*}
2^{n}r_{l}Q_{n-1}(s_{l})&=&(s_{l}+r_{l})^{n}-e^{-in\pi}(r_{l}-s_{l})^{n}\\&=&(s_{l}+r_{l})^{n}+(r_{l}-s_{l})^{n}\\&=&(-nz^{1/n})^{n}(e^{-i\frac{\pi n}{2}-il\pi}+e^{i\frac{\pi n}{2}+il\pi})\\&=&0\end{eqnarray*}
because $n$ is odd. Note that  $r_{l}= 0$ if and only if when $l=0$. So $s_{l}$, $l=1,\cdots, n-1$ are  the $n-1$ roots of the polynomial $Q_{n-1}(s)$.
Hence, we have \begin{eqnarray*}Q_{n-1}(s)&=&\prod_{l=1}^{n-1}\biggl(s-inz^{1/n}\sin\biggl(\frac{\pi}{2}+\frac{l\pi}{n}\biggr)\biggl).
\end{eqnarray*}
When $n$ is even, we verify $2^{n}r_{l}Q_{n-1}(s_{l})=0$ with $s_{l}=inz^{1/n}\sin(\frac{l\pi}{n})$, $l=0, \cdots, n-1$. For $l\le \frac{n}{2}$,\begin{eqnarray*}
2^{n}r_{l}Q_{n-1}(s_{l})&=&(s_{l}+r_{l})^{n}-e^{-in\pi}(r_{l}-s_{l})^{n}\\&=&(s_{l}+r_{l})^{n}-(r_{l}-s_{l})^{n}\\&=&(nz^{1/n})^{n}(e^{il\pi}-e^{-il\pi})\\&=&0.\end{eqnarray*}
Similarly, for $l>\frac{n}{2}$, we have $2^{n}r_{l}Q_{n-1}(s_{l})=0$. Moreover, we have $r_{l}=0$ if and only if $l=\frac{n}{2}.$ So $s_{l}$, $l\neq\frac{n}{2}$ are the $n-1$ roots of the polynomial $Q_{n-1}(s)$. Hence we have
\begin{eqnarray*}Q_{n-1}(s)&=&\prod_{l=0,l\neq\frac{n}{2}}^{n-1}\biggl(s-inz^{1/n}\sin\biggl(\frac{l\pi}{n}\biggr)\biggl).\end{eqnarray*}

\end{enumerate}
\end{proof}
We now have all the tools necessary to compute the inverse Laplace transform. First we treat the case of dimension 2.
\begin{theorem} \label{co} For $a=\frac{2}{n},  n\in \mathbb{N}$ and $m=2$, we have
\[K_{\frac{2}{n}}^{2}(x,y)=\frac{1}{n}\sum_{l=0}^{n-1}e^{-inz^{1/n} \cos(\frac{q+2\pi l}{n})}.\]
\end{theorem}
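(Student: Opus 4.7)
For $m=2$ we have $\lambda=(m-2)/2=0$, so formula (\ref{rl3}) collapses to
\[
\mathcal{L}(K_{\frac{2}{n}}^{2}(x,y,t))=\Gamma(1)\,\frac{Q_{n-1}(s)}{P_{n}(s)^{\,0+1}}=\frac{Q_{n-1}(s)}{P_{n}(s)}.
\]
The plan is to recognize the right hand side as a logarithmic derivative, read off its partial fraction decomposition via Lemma \ref{lem1}, and then invert the Laplace transform term by term.

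\textbf{Step 1: logarithmic derivative.} By relation (\ref{dr1}) we have $\frac{d}{ds}P_{n}(s)=n\,Q_{n-1}(s)$, hence
\[
\frac{Q_{n-1}(s)}{P_{n}(s)}=\frac{1}{n}\frac{P_{n}'(s)}{P_{n}(s)}=\frac{1}{n}\frac{d}{ds}\log P_{n}(s).
\]

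\textbf{Step 2: partial fractions from Lemma \ref{lem1}.} Lemma \ref{lem1} gives the factorization
\[
P_{n}(s)=\prod_{l=0}^{n-1}\bigl(s-s_{l}\bigr),\qquad s_{l}:=-inz^{1/n}\cos\!\biggl(\frac{q+2\pi l}{n}\biggr),
\]
so (for generic $x,y$ the roots $s_{l}$ are distinct, the degenerate cases being handled by continuity) the logarithmic derivative becomes
\[
\frac{1}{n}\frac{P_{n}'(s)}{P_{n}(s)}=\frac{1}{n}\sum_{l=0}^{n-1}\frac{1}{s-s_{l}}=\frac{1}{n}\sum_{l=0}^{n-1}\frac{1}{s+inz^{1/n}\cos\!\bigl(\frac{q+2\pi l}{n}\bigr)}.
\]

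\textbf{Step 3: inverse Laplace transform and $t=1$.} Applying $\mathcal{L}^{-1}\bigl(1/(s+\alpha)\bigr)=e^{-\alpha t}$ to each summand yields
\[
K_{\frac{2}{n}}^{2}(x,y,t)=\frac{1}{n}\sum_{l=0}^{n-1}e^{-inz^{1/n}\cos\!\bigl(\frac{q+2\pi l}{n}\bigr)t},
\]
and setting $t=1$ (just as was done in the $a=1$ and $a=2$ specializations above) produces the claimed formula.

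The computation is essentially mechanical once Lemma \ref{lem1} and relation (\ref{dr1}) are in hand; the only mild point to address is that the formula $P_{n}'/(nP_{n})=\frac{1}{n}\sum 1/(s-s_{l})$ strictly speaking requires the roots $s_{l}$ to be simple. The main obstacle, if any, is therefore justifying this in the degenerate configurations of $(q,n)$ where two values of $\cos((q+2\pi l)/n)$ coincide. This is handled by observing that both sides of the target identity are entire analytic in $q$, so equality on the dense open set of generic $q$ extends to all $q\in[0,\pi]$ by continuity.
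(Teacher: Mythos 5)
Your proof is correct and follows essentially the same route as the paper: combine (\ref{rl3}) with the identity $P_n'=nQ_{n-1}$ from (\ref{dr1}), expand $P_n'/P_n$ using the factorization in Lemma \ref{lem1}, invert term by term, and set $t=1$. The worry about repeated roots is actually moot, since $\frac{d}{ds}\log\prod_l(s-s_l)=\sum_l\frac{1}{s-s_l}$ holds identically for the listed linear factors whether or not the $s_l$ coincide.
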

\begin{proof} We have, using (\ref{rl3}) and (\ref{dr1})
\begin{eqnarray*}
\mathcal{L}(K^{2}_{\frac{2}{n}}(x,y,t))=\frac{Q_{n-1}(s)}{P_{n}(s)}=\frac{1}{n}\frac{\frac{d}{d s}P_{n}(s)}{P_{n}(s)}=\frac{1}{n}\sum_{l=0}^{n-1}\frac{1}{s+inz^{1/n} \cos(\frac{q+2\pi l}{n})}
.\end{eqnarray*}
Taking the inverse Laplace transform and putting $t=1$ yields the result.
\end{proof}
\begin{remark} This result was  previously obtained in \cite{Rad2} in a different way, using series multisection.
\end{remark}

When the dimension $m>2$, we first use (\ref{dl1}) to obtain
\begin{eqnarray} \label{kl1} K_{\frac{2}{n}}^{m}(x,y,t)
=-\Gamma(n\lambda)\mathcal{L}^{-1}\biggl(\frac{d}{ds}\frac{1}{P_{n}(s)^{\lambda}}\biggr).\end{eqnarray}
The inverse Laplace transform can be computed using the property of the Laplace transform
\begin{eqnarray*}\mathcal{L}^{-1}\biggl(-\frac{d}{ds}\mathcal{L}(f(t))\biggr)=tf(t) \end{eqnarray*}
and the partial fraction decomposition
\begin{eqnarray}\label{kl2}
\mathcal{L}^{-1}\biggl(\frac{1}{P_{n}(s)^{\lambda}}\biggr)=\sum_{k=1}^{n}\sum_{l=1}^{\lambda} \frac{\Phi_{kl}(a_{k}) }{(\lambda-l)!(l-1)!} t^{\lambda-l}e^{a_{k}t}
\end{eqnarray}
with  $a_{k}=-inz^{1/n} \cos(\frac{q+2\pi k}{n})$, $q=\arccos(\xi)$ and
\[\Phi_{kl}(s)=\frac{d^{l-1}}{d  s^{l-1}}\biggl[\biggl(\frac{s-a_{k}}{P_{n}(s)}\biggr)^{\lambda}\biggr].\]
Putting $t=1$ in (\ref{kl1}) and (\ref{kl2}), then yields

\begin{theorem}\label{th3}
When $a=\frac{2}{n},  n\in \mathbb{N}$, the kernel of the $(0,a)$-generalized Fourier transform in even dimension $m>2$ is given by \[
K_{\frac{2}{n}}^{m}(x,y)=\Gamma(n\lambda)\sum_{k=1}^{n}\sum_{l=1}^{\lambda} \frac{\Phi_{kl}(-inz^{1/n} \cos(\frac{q+2\pi k}{n})) }{(\lambda-l)!(l-1)!}e^{-inz^{1/n} \cos(\frac{q+2\pi k}{n})}.\]
\end{theorem}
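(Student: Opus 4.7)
The plan is to combine the Laplace-domain identity (\ref{dl1}), which reads $\mathcal{L}(K_{2/n}^m(x,y,t)) = -\Gamma(n\lambda)(d/ds)(1/P_n(s)^\lambda)$, with a partial fraction decomposition of $1/P_n(s)^\lambda$, then invert and specialize to $t=1$. Because $m$ is even with $m>2$, the index $\lambda=(m-2)/2$ is a positive integer, so $1/P_n(s)^\lambda$ is a genuine rational function. By Lemma \ref{lem1}, $P_n(s) = \prod_{k=0}^{n-1}(s-a_k)$ with $a_k = -inz^{1/n}\cos((q+2\pi k)/n)$; a short trigonometric argument shows these $n$ numbers are pairwise distinct whenever $\xi = \cos q \in (-1,1)$, so each $a_k$ is a pole of $1/P_n(s)^\lambda$ of exact multiplicity $\lambda$. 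The degenerate cases $\xi = \pm 1$ can be recovered afterwards by continuity.

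Next I would establish formula (\ref{kl2}). Writing
\[
\frac{1}{P_n(s)^\lambda} = \sum_{k=1}^{n}\sum_{l=1}^{\lambda} \frac{c_{k,l}}{(s-a_k)^l},
\]
I introduce the auxiliary function $g_k(s) = ((s-a_k)/P_n(s))^\lambda = \prod_{j\neq k}(s-a_j)^{-\lambda}$, which is analytic at $a_k$. Its Taylor expansion at $a_k$ furnishes the principal part of $1/P_n(s)^\lambda$ near $a_k$ and yields $c_{k,l} = g_k^{(\lambda-l)}(a_k)/(\lambda-l)! = \Phi_{k,\lambda-l+1}(a_k)/(\lambda-l)!$, where $\Phi_{kl}$ is defined as in the excerpt. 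Applying the elementary inversion $\mathcal{L}^{-1}(1/(s-a)^l)(t) = t^{l-1}e^{at}/(l-1)!$ term by term and relabelling the summation index $l \mapsto \lambda-l+1$ gives precisely (\ref{kl2}).

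To finish, I would invoke the standard derivative-in-$s$ rule $\mathcal{L}^{-1}(-F'(s))(t) = t\,\mathcal{L}^{-1}(F(s))(t)$. Combined with (\ref{kl1}) and (\ref{kl2}), this produces
\[
K_{2/n}^m(x,y,t) = \Gamma(n\lambda) \sum_{k=1}^{n}\sum_{l=1}^{\lambda} \frac{\Phi_{kl}(a_k)}{(\lambda-l)!(l-1)!}\, t^{\lambda-l+1}\, e^{a_k t},
\]
and setting $t=1$ delivers the claimed formula. The chief technical obstacle is keeping the indices consistent between the order $l-1$ of the derivative defining $\Phi_{kl}$, the exponent $t^{\lambda-l}$ in (\ref{kl2}), and the extra factor of $t$ introduced by the derivative-in-$s$ rule; once this bookkeeping is carried out carefully no further analytical difficulty arises, since all the manipulations reduce to finite partial fraction algebra and tabulated inverse Laplace formulas.
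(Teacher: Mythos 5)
Your proposal is correct and follows essentially the same route as the paper: combine (\ref{dl1}) with the partial fraction decomposition (\ref{kl2}) of $1/P_n(s)^{\lambda}$ based on the factorization in Lemma \ref{lem1}, invert term by term, apply the rule $\mathcal{L}^{-1}(-F'(s))=t\,\mathcal{L}^{-1}(F)$, and set $t=1$. Your explicit Taylor-coefficient derivation of the constants $c_{k,l}$ and the remark on the distinctness of the poles for $\xi\in(-1,1)$ are details the paper leaves implicit, but they do not change the argument.
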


As we have given the factored form of  $P_{n}(s)$ in Lemma \ref{lem1}, it is possible to  give an explicit formula of $\Phi_{kl}(s)$ by the following result from \cite{Bo}.
\begin{theorem} \label{pfd} Suppose $\phi(s)$ is a proper rational function having $m$ zeros $ -\sigma_{h}$ of multiplicity $M_{h}$ and $n$ poles $-s_{k}$ of multiplicity $N_{k}$,
\[ \phi(s)=\frac{\prod_{h=1}^{m}(s+\sigma_{h})^{M_{h}}}{\prod_{k=1}^{n}(s+s_{k})^{N_{k}}}.
\] Define the functions
\[f_{k}(s)=\phi(s)(s+s_{k})^{N_{k}}=\frac{\prod_{h=1}^{m}(s+\sigma_{h})^{M_{h}}}{\prod_{\substack{k'=1,\\k'\neq k}}^{n}(s+s_{k'})^{N_{k'}}}, \qquad k=1,2,\cdots, n,\] obtained from $\phi(s)$ by removing the factor $(s+s_{k})^{N_{k}}.$ The first derivative of $f_{k}(s)$ is given by
\[ f_{k}^{(1)}(s)=f_{k}(s)g_{k}(s)
\] with \[g_{k}(s)=\sum_{h=1}^{m}\frac{M_{h}}{s+\sigma_{h}}-\sum_{\substack{k'=1,\\k'\neq k}}^{n}\frac{N_{k'}}{s+s_{k'}} .\] The $r$-th derivative of $g_{k}$ is given by
\[g_{k}^{(r)}(s)=(-1)^{r}r!\biggl[\sum_{h=1}^{m}\frac{M_{h}}{(s+\sigma_{h})^{r+1}} -\sum^{n}_{\substack{k'=1,\\k'\neq k}}\frac{N_{k'}}{(s+s_{k'})^{r+1}}\biggr].
\]
The $i$-th derivative of $f_{k}(s)$ can be expressed by
 \begin{eqnarray*}
 &&f_{k}^{(i)}=(-1)^{i-1}f_{k}^{(0)}\\&&
 \begin{vmatrix}-1 & 0&0& \cdots&0&0&g_{k}^{(0)} \\
  g_{k}^{(0)} & -1&0& \cdots&0&0&g_{k}^{(1)}\\
   2g_{k}^{(1)} & g_{k}^{(0)}&-1& \cdots&0&0&g_{k}^{(2)}\\
   &&&\cdots&&&\\  (i-1)g_{k}^{(i-2)} & \binom{i-1}{2}g_{k}^{(i-3)}&\binom{i-1}{3}g_{k}^{(i-4)}& \cdots&(i-1)g_{k}^{(1)}&g_{k}^{(0)}&g_{k}^{(i-1)} \end{vmatrix}
 .\end{eqnarray*}
\end{theorem}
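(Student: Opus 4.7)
The plan is to handle the three displayed identities in order. The first, $f_k^{(1)} = f_k g_k$, is just the logarithmic derivative: since $f_k(s) = \prod_h (s+\sigma_h)^{M_h} / \prod_{k'\neq k}(s+s_{k'})^{N_{k'}}$, taking $\log$ turns the product into a sum, and differentiating gives
\[
\frac{f_k^{(1)}(s)}{f_k(s)} = \sum_{h=1}^m \frac{M_h}{s+\sigma_h} - \sum_{\substack{k'=1 \\ k' \neq k}}^n \frac{N_{k'}}{s+s_{k'}} = g_k(s).
\]
The formula for $g_k^{(r)}$ is then immediate by termwise differentiation, using $\frac{d^r}{ds^r}(s+a)^{-1} = (-1)^r r! (s+a)^{-(r+1)}$.

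For the determinant formula for $f_k^{(i)}$, the natural approach is to apply the general Leibniz rule to the identity $f_k^{(1)} = f_k g_k$, differentiating $i-1$ further times to obtain
\[
f_k^{(i)} = \sum_{j=0}^{i-1} \binom{i-1}{j}\, g_k^{(i-1-j)}\, f_k^{(j)}.
\]
Writing this relation out for each of $i = 1, 2, \ldots, n$ yields a triangular linear system in the unknowns $f_k^{(1)}, f_k^{(2)}, \ldots, f_k^{(n)}$, in which $-1$ appears on the diagonal, the subdiagonal entries are the binomial-weighted $g_k^{(r)}$, and the right-hand side is $-g_k^{(m-1)} f_k^{(0)}$ in row $m$. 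Solving for $f_k^{(i)}$ via Cramer's rule replaces the $i$-th column by the right-hand side (each entry a scalar multiple of $f_k^{(0)}$), which is precisely the last column of the displayed matrix. The overall prefactor $(-1)^{i-1}$ comes from the product of the $i-1$ remaining $-1$'s along the diagonal after that column swap.

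The main obstacle is not conceptual but combinatorial bookkeeping: one must check that the binomial coefficients $(i-1), \binom{i-1}{2}, \binom{i-1}{3}, \ldots$ populating the rows of the Cramer matrix really line up with the Leibniz coefficients $\binom{m-1}{j}$ from each relation, and that the determinantal sign and the factor $f_k^{(0)}$ extracted from the column are consistent with the stated formula. A cleaner route that sidesteps Cramer's rule is induction on $i$: expand the proposed determinant along its last column, identify the cofactors with $f_k^{(j)}/f_k^{(0)}$ for $j < i$ by the induction hypothesis, and verify that the resulting sum reproduces the Leibniz recurrence above. Either approach reduces the theorem to a careful matching of binomial identities.
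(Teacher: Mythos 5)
The paper does not prove this statement at all: Theorem \ref{pfd} is quoted verbatim from Brugia \cite{Bo} purely as a tool for the partial fraction decompositions, so there is no in-paper proof to compare against. Your argument is correct and is essentially the standard (indeed Brugia's original) derivation: logarithmic differentiation gives $f_k^{(1)}=f_kg_k$ and the closed form for $g_k^{(r)}$, Leibniz applied to $f_k^{(i)}=(f_kg_k)^{(i-1)}$ gives the recurrence $f_k^{(i)}=\sum_{j=0}^{i-1}\binom{i-1}{j}g_k^{(i-1-j)}f_k^{(j)}$, and Cramer's rule on the resulting lower-triangular system produces the determinant. The bookkeeping you flag as the ``main obstacle'' does come out cleanly: row $p$ of the system has $-1$ in position $p$ and $\binom{p-1}{j}g_k^{(p-1-j)}$ in position $j<p$, which is exactly the displayed matrix; the coefficient matrix has determinant $(-1)^i$, and factoring $-f_k^{(0)}$ out of the replaced column gives $f_k^{(i)}=(-1)^{i-1}f_k^{(0)}\det(M)$ as claimed (one can sanity-check $i=2,3$ directly). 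Only a notational slip to fix: you reuse $m$ as a row index, colliding with the number of zeros in the statement.
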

\subsection{Generating function when $a=\frac{2}{n}$ and $m$ even}
For fixed $a=\frac{2}{n}$ and $n\in \mathbb{N}$, we define the formal generating function of the $(0,a)$-generalized Fourier kernel of even dimension by
\[G_{\frac{2}{n}}(x,y,\varepsilon)=\sum_{\lambda=0}^{\infty}\frac{1}{2^{n\lambda} \Gamma(n\lambda+1)}(-2e^{-in\pi/2}(n)^{n}z\varepsilon)^{\lambda}K_{\frac{2}{n}}^{m}(x,y)
.\]
We introduce an auxiliary variable $t$ in the generating function  as
\[G_{\frac{2}{n}}(x, y, \varepsilon, t)=\sum_{\lambda=0}^{\infty}\frac{1}{2^{n\lambda} \Gamma(n\lambda+1)}(-2e^{-in\pi/2}(n)^{n}z\varepsilon)^{\lambda}K_{\frac{2}{n}}^{m}(x,y,t).\]
 Then we compute the Laplace transform of $G_{\frac{2}{n}}(x, y, \varepsilon, t)$, and get
\begin{eqnarray*}
\mathcal{L}(G_{\frac{2}{n}}(x, y, \varepsilon, t))&=&\sum_{\lambda=0}^{\infty}
\frac{1}{r}\frac{((s+r)^{n}-e^{-in\pi}(r-s)^{n})(-2e^{-in\pi/2}(n)^{n}z\varepsilon)^{\lambda}}{((s+r)^{n}-2\xi e^{-in\pi/2}(n)^{n}z+e^{-in\pi}(r-s)^{n})^{\lambda+1}}
\\&=&\frac{1}{r}\frac{(s+r)^{n}-e^{-in\pi}(r-s)^{n}}{(s+r)^{n}-2(\xi+\varepsilon)e^{-in\pi/2}(n)^{n}z+e^{-in\pi}(r-s)^{n}}.
\end{eqnarray*}
Comparing  with Theorem \ref{co}, we find the only difference is that $\xi$ in the latter becomes $\xi+\varepsilon$.
Now we can give the generating function by  taking the inverse Laplace transform and setting $t=1$.
\begin{theorem} Let $a=2/n$, with $n\in \mathbb{N}$. Then the formal generating function of the $(0,a)$-generalized Fourier kernel of even dimension is
\begin{eqnarray*}G_{\frac{2}{n}}(x, y, \varepsilon)&=&\sum_{\lambda=0}^{\infty}\frac{1}{2^{n\lambda} \Gamma(n\lambda+1)}(-2e^{-in\pi}(n)^{n}z\varepsilon)^{\lambda}K_{\frac{2}{n}}^{m}(x,y)\\&=&\frac{1}{n}\sum_{l=0}^{n-1}e^{-inz^{1/n} \cos(\frac{\tilde{q}+2\pi l}{n})},\end{eqnarray*}
with $\tilde{q}=\arccos (\xi+\varepsilon)$.\end{theorem}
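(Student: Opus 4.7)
My plan is to imitate the auxiliary-variable plus Laplace-transform strategy used throughout the paper. Starting from the expression $G_{\frac{2}{n}}(x,y,\varepsilon,t)$ with the auxiliary variable $t$ already built in, I would take its Laplace transform in $t$ term by term (using the series-Laplace theorem recalled before Theorem \ref{laplradial}).

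For each fixed $\lambda$ I would substitute formula (\ref{rl3}), $\mathcal{L}(K^{m}_{\frac{2}{n}}(x,y,t))=\Gamma(n\lambda+1)Q_{n-1}(s)/P_{n}(s)^{\lambda+1}$. The factor $\Gamma(n\lambda+1)$ cancels against the $1/\Gamma(n\lambda+1)$ in the coefficients, while $2^{-n\lambda}$ combines with $P_{n}(s)^{-\lambda}$, so the sum over $\lambda$ collapses to a geometric series with ratio proportional to $\varepsilon/P_{n}(s)$. Summing this geometric series produces
\[
\mathcal{L}(G_{\frac{2}{n}}(x,y,\varepsilon,t))=\frac{1}{r}\,\frac{(s+r)^{n}-e^{-in\pi}(r-s)^{n}}{(s+r)^{n}-2(\xi+\varepsilon)e^{-in\pi/2}n^{n}z+e^{-in\pi}(r-s)^{n}},
\]
which differs from the Laplace-domain expression in the proof of Theorem \ref{co} only by the substitution $\xi\mapsto \xi+\varepsilon$, equivalently $q=\arccos(\xi)\mapsto \tilde{q}=\arccos(\xi+\varepsilon)$.

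The inverse Laplace transform then becomes a verbatim repetition of the two-dimensional argument. I would apply Lemma \ref{lem1} to the shifted polynomial, use the logarithmic-derivative identity $Q_{n-1}(s)/P_{n}(s)=\frac{1}{n}\,\frac{d\log P_{n}}{ds}=\frac{1}{n}\sum_{l=0}^{n-1}\frac{1}{s-a_{l}}$, where now $a_{l}=-inz^{1/n}\cos((\tilde{q}+2\pi l)/n)$, and invert term by term using $\mathcal{L}^{-1}(1/(s-a))=e^{at}$. Putting $t=1$ in the result yields the claimed closed form for $G_{\frac{2}{n}}(x,y,\varepsilon)$.

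The step I expect to be most delicate is the algebraic bookkeeping in the geometric-series manipulation: one has to track the $2^{n\lambda}$ factors against the $2^{-n}$ built into the definition of $Q_{n-1}$ and $P_{n}$, and the $e^{-in\pi/2}$ phases, so that the denominator emerges in exactly the form $2^{n}P_{n}(s)$ with $\xi$ replaced by $\xi+\varepsilon$. Since the theorem treats $G_{\frac{2}{n}}$ as a \emph{formal} generating function, no convergence issue in $\varepsilon$ needs to be addressed: all identities can be read at the level of formal power series in $\varepsilon$ whose coefficients are individually well-defined inverse Laplace transforms. Once the bookkeeping is done, the inverse transform and the final substitution $t=1$ proceed without incident, and the theorem follows by invoking the $m=2$ result of Theorem \ref{co} at the shifted angle $\tilde{q}$.
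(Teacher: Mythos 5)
Your proposal is correct and follows essentially the same route as the paper: Laplace transform the auxiliary-variable generating function term by term, observe that the $\Gamma(n\lambda+1)$ and $2^{n\lambda}$ factors cancel so the sum over $\lambda$ collapses to a geometric series whose net effect is the substitution $\xi\mapsto\xi+\varepsilon$ in the denominator, and then invert exactly as in Theorem \ref{co} at the shifted angle $\tilde{q}$. The only point worth noting is that your bookkeeping uses the phase $e^{-in\pi/2}$ from the definition of $G_{\frac{2}{n}}$, which is indeed what makes the denominator close up as $2^{n}P_{n}(s)$ with $\xi+\varepsilon$; the statement of the theorem prints $e^{-in\pi}$ in the coefficient, an inconsistency with the definition that your computation implicitly (and correctly) resolves in favour of $e^{-in\pi/2}$.
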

\begin{remark} By taking  consecutive derivatives with respect to $\varepsilon$, we can get an alternative expression for the even dimensional kernel $K_{\frac{2}{n}}^{m}(x,y)$. This coincides with Proposition 2 in \cite{Rad2} and Theorem 1 in \cite{Dn}.
\end{remark}

\subsection{The bounds of the kernel when $a=\frac{2}{n}$ and $m\ge 2$}
In this section, we prove the boundedness of the kernel $K_{\frac{2}{n}}^{m}(x,y)$, $m\ge 2$. This is not obvious from the explicit expansion in Theorem \ref{th3} as we don't know the bounds of $\Phi_{kl}(a_{k})$ in (\ref{kl2}). We first establish a technical lemma. Let us recall the convolution formula of the Laplace transform. Denoting $\mathcal{L}(g(t))=G(s)$ and $\mathcal{L}(f(t))=F(s)$, we have
\begin{eqnarray} \label{lc}\mathcal{L}^{-1}(G(s)F(s))&=&\int_{0}^{t}g(t-\tau)f(\tau)d\tau.\end{eqnarray}
\begin{lemma}\label{le1} For $a_{j}\in \mathbb{R}, j=1,\cdots,n,$ and $ k>0$, put \[F_{n, k}(s)=\displaystyle\frac{1}{\prod_{j=1}^{n}(s+ia_{j})^{k}}\] with inverse Laplace transform \[f_{n, k}(t)=\mathcal{L}^{-1}(F_{n,k}(s)).\]Then
\[|f_{n, k}(t)|\le \frac{t^{nk-1}}{\Gamma(nk)}, \qquad\forall t\in ]0,\infty[.\]
\end{lemma}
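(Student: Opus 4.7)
The plan is to prove this by induction on $n$, using the convolution formula (\ref{lc}) and the fact that $|e^{-ia_j t}| = 1$ whenever $a_j \in \mathbb{R}$.

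For the base case $n=1$, recall that
\[
\mathcal{L}^{-1}\!\left(\frac{1}{(s+ia_1)^k}\right) = \frac{t^{k-1}e^{-ia_1 t}}{\Gamma(k)},
\]
so $|f_{1,k}(t)| = t^{k-1}/\Gamma(k)$, giving equality in the bound.

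For the induction step, factor
\[
F_{n,k}(s) = F_{n-1,k}(s)\cdot\frac{1}{(s+ia_n)^k},
\]
and apply the convolution formula (\ref{lc}), so that
\[
f_{n,k}(t) = \int_0^t f_{n-1,k}(t-\tau)\,\frac{\tau^{k-1}e^{-ia_n\tau}}{\Gamma(k)}\,d\tau.
\]
Taking absolute values and using $|e^{-ia_n\tau}|=1$ together with the induction hypothesis $|f_{n-1,k}(t-\tau)| \le (t-\tau)^{(n-1)k-1}/\Gamma((n-1)k)$, I get
\[
|f_{n,k}(t)| \le \frac{1}{\Gamma((n-1)k)\Gamma(k)}\int_0^t (t-\tau)^{(n-1)k-1}\tau^{k-1}\,d\tau.
\]
The remaining integral is exactly a Beta-function evaluation: after the substitution $\tau = tu$ it becomes $t^{nk-1}B((n-1)k,k) = t^{nk-1}\Gamma((n-1)k)\Gamma(k)/\Gamma(nk)$, and the stated bound follows.

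I don't see a significant obstacle here: the proof is essentially bookkeeping around the convolution formula. The only subtlety is that each $a_j$ must be real for $|e^{-ia_j\tau}|=1$ to hold, which is built into the hypothesis. The resulting bound is in fact sharp, attained in the limit $a_j \to 0$.
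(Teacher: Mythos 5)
Your proof is correct and follows essentially the same route as the paper: induction on $n$, the base case from $\mathcal{L}^{-1}((s+ia_1)^{-k})=t^{k-1}e^{-ia_1t}/\Gamma(k)$, and the inductive step via the Laplace convolution formula and the Beta-function identity $\int_0^1 u^{(n-1)k-1}(1-u)^{k-1}du=\Gamma((n-1)k)\Gamma(k)/\Gamma(nk)$. The only difference is the immaterial ordering of the two factors inside the convolution.
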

\begin{proof} We prove it by  induction.  By the Laplace transform formula \begin{eqnarray*} \mathcal{L}\biggl(\frac{t^{k-1}e^{-\alpha t}}{\Gamma(k)}\biggr)=\frac{1}{(s+\alpha)^{k}}, \qquad  k>0,\end{eqnarray*} we have \[f_{1, k}(t)=\frac{t^{k-1}}{\Gamma(k)}e^{-ia_{1}t},\] so \[|f_{1, k}(t)|\le \frac{t^{k-1}}{\Gamma(k)}.\]
When $n=2$, by the convolution formula (\ref{lc}), we have \begin{eqnarray*} |f_{2, k}(t)|&=&\biggl|\int_{0}^{t}\frac{(t-\tau)^{k-1}e^{-ia_{1}(t-\tau)}}{\Gamma(k)} f_{1, k}(\tau)d\tau\biggr|\le\int_{0}^{t}\frac{(t-\tau)^{k-1}}{\Gamma(k)}|f_{1, k}(\tau)|d\tau\\
&\le&\frac{1}{\Gamma(k)^{2}}\int_{0}^{t}(t-\tau)^{k-1}\tau^{k-1}d\tau
=\frac{t^{2k-1}}{\Gamma(k)^{2}}\int_{0}^{1}(1-x)^{k-1}x^{k-1}dx\\&=&\frac{t^{2k-1}}{\Gamma(2k)}
\end{eqnarray*}
where we have substituted $\tau=tx$ in the third integral.
We assume \begin{eqnarray}\label{mi1}|f_{n-1, k}(t)|\le \frac{t^{(n-1)k-1}}{\Gamma((n-1)k)}.\end{eqnarray}
Then by the convolution formula (\ref{lc}) and (\ref{mi1}), we have
\begin{eqnarray*} |f_{n, k}(t)|&\le& \int_{0}^{t}\frac{(t-\tau)^{k-1}e^{-ia_{n}(t-\tau)}}{\Gamma(k)}|f_{n-1}(\tau)|d\tau
\\&\le&\int_{0}^{t}\frac{(t-\tau)^{k-1}}{\Gamma(k)}\frac{\tau^{(n-1)k-1}}{\Gamma((n-1)k)}d\tau
\\&\le&\frac{t^{nk-1}}{\Gamma(k)\Gamma((n-1)k)}\int_{0}^{1}x^{(n-1)k-1}(1-x)^{k-1}dx
\\&\le&t^{nk-1}\frac{B((n-1)k,k)}{\Gamma(k)\Gamma((n-1)k)}\\&=&\frac{t^{nk-1}}{\Gamma(nk)}.
\end{eqnarray*}
where we used the same substitution as before, and with $B(u,v)$ the beta function.
\end{proof}

By (\ref{dl1}), when $\lambda>0$,
\begin{eqnarray}  \label{lk1} \mathcal{L}(K_{\frac{2}{n}}^{m}(x,y,t))=-\Gamma(n\lambda)\frac{d}{ds}\frac{1}{(P_{n}(s))^{\lambda}}
=-\Gamma(n\lambda)\frac{d}{ds}\frac{1}{\biggl(\prod_{l=0}^{n-1}\biggl(s+inz^{1/n} \cos\biggl(\frac{q+2\pi l}{n}\biggr)\biggr)\biggr)^{\lambda}}. \end{eqnarray}
Setting $t=1$, we get
\begin{eqnarray*}K_{\frac{2}{n}}^{m}(x,y)=\Gamma(n\lambda)f_{n, \lambda}(1)\end{eqnarray*} with $a_{l}=nz^{1/n} \cos\biggl(\frac{q+2\pi l}{n}\biggr)$ in $f_{n,\lambda}(t)$. The problem of finding an integral expression of $K_{\frac{2}{n}}^{m}(x,y)$ thus reduces to finding an integral expression of the function $f_{n, \lambda}(t)$.

From the Laplace transform table \cite{E2}, we have
\begin{eqnarray*} \mathcal{L}^{-1}\biggl(\frac{d}{ds}\biggl(\frac{1}{((s+ib)(s-ib))^{\nu+1/2}}\biggr)\biggr)
=\frac{\sqrt{\pi}}{2^{\nu}\Gamma(\nu+1/2)}t^{\nu+1}\frac{J_{\nu}(b t)}{b^{\nu}}, \qquad {\rm Re}\, \nu>-1, {\rm Re}\,s>|{\rm Im}\, b|.
\end{eqnarray*} Compared with (\ref{lk1}), the Fourier kernel $K_{\frac{2}{n}}^{m}(x,y)$ and $f_{n, k}(t)$ could be thought of as a generalization of the Bessel function. We will see similar behavior in the Dunkl case, see Section 4.

By the inverse Laplace formula from \cite{E2},
\begin{eqnarray*}
\mathcal{L}^{-1}\biggl(\frac{\Gamma(\nu)}{(s+a)^{\nu}(s+b)^{\nu}}\biggr)=\sqrt{\pi}\biggl(\frac{t}{a-b}\biggr)
^{\nu-1/2}e^{-\frac{(a+b)t}{2}}I_{\nu-1/2}\biggl(\frac{a-b}{2}t\biggr), \qquad {\rm Re}\,\nu>0.
\end{eqnarray*}
we can express $f_{n, \lambda}(t)$ as the  convolution of Bessel functions and exponential functions, using (\ref{lc}).

In particular, when $n=3$, and ${\rm Re}\,s >0$, we have
\begin{eqnarray} \label{I3} f_{3, k}(t)=\mathcal{L}^{-1}(F_{3, k}(s))&=&\mathcal{L}^{-1}\biggl(\displaystyle\frac{1}{\prod_{j=1}^{3}(s+ia_{j})^{k}}\biggr)
\nonumber\\&=&\frac{t^{3k-1}}{\Gamma(3k)}e^{ia_{1}t}\Phi_{2}(k,k;3k; i(a_{1}-a_{2})t, i(a_{1}-a_{3})t )
\end{eqnarray}
where $\Phi_{2}(c_{1},c_{2};c_{3};w,z)=\sum_{k,l=0}^{\infty}\frac{(c_{1})_{k}(c_{2})_{l}}{(c_3)_{k+l}}\frac{w^{k}z^{l}}{k!l!},$
see \cite{PB}. Another derivation of the expression obtained here without using Laplace transform  is given in \cite{DDa}.

Now  we can give the main result of this subsection,
\begin{theorem}\label{th7} For $n\in \mathbb{N}$ and $m\ge 2$,  the kernel of the $(0,2/n)$-generalized Fourier transform satisfies
\[|K^{m}_{\frac{2}{n}}(x,y)| \le  1.\]
\end{theorem}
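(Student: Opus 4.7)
The plan is to split the argument into the two-dimensional case, where an explicit closed form is already available, and the higher dimensional case, where the bound will follow from the convolution estimate established in Lemma \ref{le1}.

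First, for $m=2$ (so $\lambda=0$), Theorem \ref{co} gives
\[
K_{\frac{2}{n}}^{2}(x,y)=\frac{1}{n}\sum_{l=0}^{n-1}e^{-inz^{1/n} \cos\left(\frac{q+2\pi l}{n}\right)}.
\]
Since $\xi=\langle x,y\rangle/(|x||y|)\in[-1,1]$, the angle $q=\arccos(\xi)$ is real, hence each exponent is purely imaginary and each summand has modulus $1$. The triangle inequality then yields $|K_{2/n}^{2}(x,y)|\le 1$ immediately.

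Next, for $m>2$ (so $\lambda>0$), I would combine (\ref{kl1}) with the standard identity $\mathcal{L}^{-1}(-F'(s))(t)=t\,f(t)$. Using the factorization of $P_n$ from Lemma \ref{lem1}, write
\[
P_{n}(s)^{\lambda}=\prod_{l=0}^{n-1}(s+ia_{l})^{\lambda},\qquad a_{l}=nz^{1/n}\cos\left(\tfrac{q+2\pi l}{n}\right)\in\mathbb{R}.
\]
Hence
\[
K_{\frac{2}{n}}^{m}(x,y,t)=\Gamma(n\lambda)\,t\,f_{n,\lambda}(t),
\]
where $f_{n,\lambda}$ is the inverse Laplace transform of $F_{n,\lambda}(s)=1/\prod_{l=0}^{n-1}(s+ia_{l})^{\lambda}$. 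Setting $t=1$ recovers $K_{2/n}^{m}(x,y)=\Gamma(n\lambda)f_{n,\lambda}(1)$.

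Now I would invoke Lemma \ref{le1} (which applies precisely because the $a_{l}$ are real and $k=\lambda>0$) at $t=1$ to obtain
\[
|f_{n,\lambda}(1)|\le\frac{1}{\Gamma(n\lambda)},
\]
which gives the claimed bound $|K_{2/n}^{m}(x,y)|\le 1$. The main conceptual step is the observation that all the roots $-ia_{l}$ of $P_{n}$ lie on the imaginary axis (so that the convolution estimate of Lemma \ref{le1} applies without losing absolute value); once this is noted, the proof is essentially immediate and, pleasantly, covers both even and odd $m\ge 3$ uniformly since Lemma \ref{le1} is stated for all $k>0$, not only integer exponents.
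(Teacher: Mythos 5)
Your argument is correct and rests on the same two pillars as the paper's proof --- Theorem \ref{co} for the base case $m=2$ and the convolution estimate of Lemma \ref{le1}, applied thanks to the factorization of $P_{n}(s)$ into factors $s+ia_{l}$ with real $a_{l}$ from Lemma \ref{lem1} --- but you assemble them differently for $m>2$. The paper factors $\mathcal{L}(K_{\frac{2}{n}}^{m}(x,y,t))=\Gamma(n\lambda+1)G_{1}(s)G_{2}(s)$ with $G_{1}=Q_{n-1}/P_{n}$ (the Laplace transform of the two-dimensional kernel, whose original is bounded by $1$ via Theorem \ref{co}) and $G_{2}=P_{n}^{-\lambda}$ (bounded via Lemma \ref{le1}), and then performs one further Laplace convolution on $[0,1]$. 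You instead absorb $Q_{n-1}=\frac{1}{n}\frac{d}{ds}P_{n}$ into a derivative via (\ref{dl1})--(\ref{kl1}) and use $\mathcal{L}^{-1}(-F'(s))(t)=t\,f(t)$ to arrive directly at $K_{\frac{2}{n}}^{m}(x,y)=\Gamma(n\lambda)f_{n,\lambda}(1)$ --- an identity the paper itself records just before the theorem --- so that Lemma \ref{le1} at $t=1$ finishes the proof in one line. Your route saves a convolution and does not need the two-dimensional bound as input when $m>2$; the paper's route has the side benefit of exhibiting the higher-dimensional kernel as a convolution of the two-dimensional kernel with $f_{n,\lambda}$, a structural fact it reuses elsewhere. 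The one point worth making explicit in both arguments is the identification $P_{n}(s)^{\lambda}=\prod_{l}(s+ia_{l})^{\lambda}$ for non-integer $\lambda$ (needed when $m$ is odd), which is legitimate for $\mathrm{Re}\,s$ large since all factors then lie in the right half-plane; your observation that the $a_{l}$ are real is indeed exactly what makes Lemma \ref{le1} applicable.
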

\begin{proof} When $a=\frac{2}{n}$, the Laplace transform of the $(0,a)$-generalized Fourier kernel is
\begin{eqnarray*}
&&\mathcal{L}(K_{\frac{2}{n}}^{m}(x,y,t))= \Gamma(n\lambda+1)G_{1}(s)G_{2}(s)
\end{eqnarray*}
with \begin{eqnarray*} &&G_{1}(s)=\frac{Q_{n-1}(s)}{\prod_{l=0}^{n-1}(s+inz^{1/n} \cos(\frac{q+2\pi l}{n})) }, \\ &&G_{2}(s)=\frac{1}{(\prod_{l=0}^{n-1}(s+inz^{1/n} \cos(\frac{q+2\pi l}{n})))^{\lambda}}.\end{eqnarray*} Denote $g_{j}(t)=\mathcal{L}^{-1}(G_{j}),  j=1,2.$
By Lemma \ref{le1}, we know that the inverse Laplace transform $g_{2}(t)$ of $G_{2}(s)$ is bounded by $\frac{t^{n\lambda-1}}{\Gamma(n\lambda)}$.
By Theorem \ref{co}, we know that $g_{1}(t)=K_{\frac{2}{n}}^{2}(x,y,t)$ is bounded by $1$ for any $t\in \mathbb{R}$.
Using the convolution formula (\ref{lc}) again, then setting $t=1$, we have
\begin{eqnarray*}|K_{\frac{2}{n}}^{m}(x,y)|&=& \Gamma(n\lambda+1)\biggl|\int_{0}^{1}g_{1}(1-\tau) g_{2}(\tau)d\tau \biggr|
\\& \le&  \Gamma(n\lambda+1) \int_{0}^{1}\frac{\tau^{n\lambda-1}}{\Gamma(n\lambda)}d\tau
\\&=&\frac{ \Gamma(n\lambda+1)}{\Gamma(n\lambda) n\lambda}\\&=&1.\end{eqnarray*}
\end{proof}
\begin{remark}
With this result, valid for both even and odd dimension, we could get the bound of the $(\kappa, a)$-generalized Fourier kernel for any reduced root system with positive multiplicity function $\kappa$ and some $\alpha$.
 Theorem \ref{th7}  greatly extends the applicability of the uncertainty principle and generalized translation operator in \cite{J1} and \cite{Go}.
\end{remark}
\subsection{Integral expression of the kernel for arbitrary $a>0$}
In Theorem \ref{th7}, we have shown that the Fourier kernel $K_{\frac{2}{n}}^{m}(x,y)$ when $m\ge 2$ is the Laplace convolution of the Fourier kernel when $m=2$ and the function $f_{n, k}(t)$ in Lemma \ref{le1}. In this subsection we give the integral expression of the Fourier kernel of $K_{a}^{m}(x,y)$ for $m\ge 2$ and $a>0$.

For general $a>0$ and $m\ge 2$, the Fourier kernel in the Laplace domain can be written as
\begin{eqnarray*}
&&\mathcal{L}(K_{a}^{m}(x,y,t))\nonumber\\&=&2^{2\lambda/a}\Gamma\biggl(\frac{2\lambda+a}{a}\biggr)\frac{1}{r}\biggl(\frac{1}{R}\biggr)^{2\lambda/a}\frac{1-u_{R}^{2}}{(1-2\xi u_{R}+u_{R}^2)^{\lambda+1}} \nonumber\\&=&2^{2\lambda/a}\Gamma\biggl(\frac{2\lambda+a}{a}\biggr)\frac{1}{r}\biggl(\frac{r-s}{(\frac{2}{a}z^{a/2})^2}\biggr)^{2\lambda/a}
\frac{1-u_{R}^{2}}{((u_{R}-e^{i\varrho})(u_{R}-e^{-i\varrho}))^{\lambda+1}},\end{eqnarray*}
where $u_{R}=e^{\frac{-i\pi}{a}} (\frac{2z^{a/2}}{aR})^{2/a}$, $r=\sqrt{s^{2}+(\frac{2}{a}z^{a/2})^{2}}$, $R=s+r$ and $\xi=\frac{e^{i\varrho}+e^{-i\varrho}}{2}$.

It is possible to give an integral expression of this kernel in terms of the generalized Mittag-Leffler function. We give the definition and its Laplace transform here, see also Chapter 2 in \cite{MH}.
\begin{definition}
\label{DefML} The generalized Mittag-Leffler function is defined by
\[E_{\epsilon,\gamma}^{\delta}(z):=\sum_{n=0}^{\infty}\frac{(\delta)_{n}z^{n}}{\Gamma(\epsilon n+\gamma)n!},\]
where $\epsilon, \gamma, \delta \in \mathbb{C}$ with ${\rm Re}\, \epsilon >0.$ For $\delta=1,$ it reduces to the  Mittag-Leffler function.
\end{definition}
The Laplace transform of the generalized  Mittag-Leffler function is
\[\mathcal{L}(t^{\gamma-1}E^{\delta}_{\epsilon, \gamma}(bt^{\epsilon}))=\frac{1}{s^{\gamma}}\frac{1}{(1-bs^{-\epsilon})^{\delta}}\]
where ${\rm Re}\, \epsilon>0$, ${\rm Re}\, \gamma>0$, ${\rm Re}\, s>0$ and $s>|b|^{1/({\rm Re}\, \epsilon)}$, see \cite{MH}.

 Now, we give the integral expression of the $(0, a)$-generalized Fourier kernel as follows.
\begin{theorem}\label{ga} Let $b_{\pm}=e^{\pm i\varrho}e^{i\pi/a}(\frac{2}{a})^{2/a}z$ and
\[
h(t)=z^{-2(\lambda+1)}\int_{0}^{t}\zeta^
{\frac{2}{a}(\lambda+1)-1}E^{\lambda+1}_{\frac{2}{a}, \frac{2}{a}(\lambda+1)}(b_{+}\zeta^{\frac{2}{a}})
(t-\zeta)^
{\frac{2}{a}(\lambda+1)-1}E^{\lambda+1}_{\frac{2}{a}, \frac{2}{a}(\lambda+1)}(b_{-}(t-\zeta)^{\frac{2}{a}})d\zeta
.\]
Then for $a>0$ and $m\ge 2$,  the kernel of the $(0,a)$-generalized Fourier transform is
\begin{eqnarray*}K_{a}^{m}(x,y)&=&c^{m}_{a}\int_{0}^{1}\biggl((1+2\tau)^{-\frac{\lambda}{a}}
J_{\frac{2\lambda}{a}}\biggl(\frac{2}{a}z^{a/2}\sqrt{1+2\tau}\biggr)\\&&-e^{-i\frac{2\pi}{a}}(1+2\tau)^{-\frac{\lambda+2}{a}}
J_{\frac{2\lambda+4}{a}}\biggl(\frac{2}{a}z^{a/2}\sqrt{1+2\tau}\biggr)\biggr)h(\tau)d\tau.
\end{eqnarray*}
with $c^{m}_{a}=2^{-(2\lambda+4)/a}\Gamma\biggl(\frac{2\lambda+a}{a}\biggr)e^{-i\frac{2\pi(\lambda+1)}{a}}a^{4(\lambda+1)/a}$.
\end{theorem}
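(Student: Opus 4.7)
The plan is to invert the Laplace transform for $K_a^m(x,y,t)$ obtained in Theorem \ref{laplradial} by factoring it into two pieces and then applying the convolution formula (\ref{lc}). Setting $C=(2/a)^{2/a}z$, $\beta_{\pm}=e^{-i\pi/a}Ce^{\pm i\varrho}$, and $R=s+r$, I would first use the identity $(r-s)^{2/a}=C^{2}/R^{2/a}$ (which follows from $R(r-s)=(2z^{a/2}/a)^{2}=C^{a}$) to rewrite (\ref{rl2}) in the equivalent form
\[
\mathcal{L}(K_a^m(x,y,t))=2^{2\lambda/a}\Gamma\Bigl(\frac{2\lambda+a}{a}\Bigr)\frac{R^{2\lambda/a}(R^{4/a}-e^{-2i\pi/a}C^{2})}{r\bigl((R^{2/a}-\beta_{+})(R^{2/a}-\beta_{-})\bigr)^{\lambda+1}},
\]
splitting the numerator as $R^{2(\lambda+2)/a}-e^{-2i\pi/a}C^{2}R^{2\lambda/a}$, so that the two summands correspond to the two Bessel terms appearing in the integrand.

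Next, I would apply the Laplace transform formula for the generalized Mittag-Leffler function (cited just after Definition \ref{DefML}) with $\epsilon=2/a$, $\gamma=(2/a)(\lambda+1)$, and $\delta=\lambda+1$, which yields $\mathcal{L}(\tau^{\gamma-1}E^{\lambda+1}_{2/a,\gamma}(b_{\pm}\tau^{2/a}))(\tilde s)=(\tilde s^{2/a}-b_{\pm})^{-(\lambda+1)}$. Convolving the two Mittag-Leffler expressions (one with $b_{+}$, one with $b_{-}$) identifies $z^{2(\lambda+1)}h(t)$ as the inverse Laplace transform of $\bigl((\tilde s^{2/a}-b_{+})(\tilde s^{2/a}-b_{-})\bigr)^{-(\lambda+1)}$. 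Since $b_{\pm}=e^{2i\pi/a}\beta_{\pm}$, this matches the Mittag-Leffler factor in the denominator of the kernel's Laplace transform up to the phase $e^{-2i\pi(\lambda+1)/a}$, which is absorbed into $c^m_a$.

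The remaining Bessel-type factor, of the form $R^{2\mu/a}/r$ for $\mu\in\{\lambda,\lambda+2\}$, is to be recognized using formula (\ref{l1}), which gives $\mathcal{L}(J_{\nu}(ct))=c^{\nu}/(rR^{\nu})$ with $c=(2/a)z^{a/2}$. Via the substitution $u=\sqrt{1+2\tau}$ (so $d\tau=u\,du$), the inverse Laplace transform of $R^{2\mu/a}/r$ can be identified with $(1+2\tau)^{-\mu/a}J_{2\mu/a}(c\sqrt{1+2\tau})$; these are precisely the two Bessel summands in the integrand, the second carrying the factor $e^{-2i\pi/a}$ inherited from $e^{-2i\pi/a}C^{2}$ in the numerator. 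Applying the Laplace convolution formula (\ref{lc}) and setting $t=1$ then produces the claimed integral over $\tau\in[0,1]$.

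The main technical obstacle is the careful phase bookkeeping: tracking all factors of $e^{\pm i\pi/a}$ and $e^{\pm 2i\pi/a}$ through the factorization and verifying they combine precisely into the constant $c^m_a=2^{-(2\lambda+4)/a}\Gamma((2\lambda+a)/a)e^{-2i\pi(\lambda+1)/a}a^{4(\lambda+1)/a}$ displayed in the statement. A secondary difficulty is justifying the appearance of the unusual argument $\sqrt{1+2\tau}$ in the Bessel integrand; I expect this to emerge naturally from the Laplace inversion of $R^{2\mu/a}/r$, likely via a contour deformation or an identity trading the Laplace variable $s$ (equivalently $R=s+r$) for the new integration variable $u$.
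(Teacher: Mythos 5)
Your decomposition into a Mittag--Leffler factor and a Bessel-type factor is essentially the paper's, and your identification of $z^{2(\lambda+1)}h$ as the inverse Laplace transform of $\bigl((\tilde s^{2/a}-b_+)(\tilde s^{2/a}-b_-)\bigr)^{-(\lambda+1)}$ via the Mittag--Leffler formula and the convolution rule is correct. The gap is in how you propose to recombine the two factors. The Mittag--Leffler factor inside $\mathcal{L}(K_a^m(x,y,t))$ is a function of $R^{2/a}$ (equivalently of $\varpi^{2/a}$ with $\varpi=r-s$), not of the Laplace variable $s$ itself, so its inverse Laplace transform in $s$ is \emph{not} $z^{2(\lambda+1)}h(t)$ up to a phase, and the ordinary convolution formula (\ref{lc}) cannot be applied to the product as you describe. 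Likewise $R^{2\mu/a}/r$ with $\mu>0$ grows at infinity and is not the Laplace transform of $(1+2\tau)^{-\mu/a}J_{2\mu/a}(c\sqrt{1+2\tau})$; note that $\tau$ in the final statement is an integration variable over $[0,1]$ obtained after setting $t=1$, and the argument $\sqrt{1+2\tau}$ is not of the convolution form $t-\tau$. You flag both points as technical obstacles to be resolved ``via a contour deformation,'' but they are exactly where the proof lives.

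The missing ingredient is the composition formula (\ref{lb}) from \cite{PB},
\[
\mathcal{L}^{-1}\Bigl(\frac{(\sqrt{s^2+c^2}-s)^{\nu}}{\sqrt{s^2+c^2}}\,F(\sqrt{s^2+c^2}-s)\Bigr)=(c^2t)^{\nu/2}\int_{0}^{t}(t+2\tau)^{-\nu/2}J_{\nu}(c\sqrt{t^2+2\tau t})f(\tau)\,d\tau,
\]
with $c=\frac{2}{a}z^{a/2}$, which handles precisely the substitution of $\varpi=r-s$ for the Laplace variable. Writing $\mathcal{L}(K_a^m)=H_1H_2$ with $H_1=F(\varpi)$ the Mittag--Leffler product and $H_2$ a linear combination of $\varpi^{2\lambda/a}/r$ and $\varpi^{(2\lambda+4)/a}/r$ (the two terms coming from $1-u_R^2$, the second carrying the phase $e^{-2i\pi/a}$), this formula produces in one stroke both Bessel kernels with argument $\sqrt{1+2\tau}$ (for $\nu=2\lambda/a$ and $\nu=(2\lambda+4)/a$) and the integration of them against $h(\tau)$. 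Once (\ref{lb}) is invoked, the remainder of your plan --- the Mittag--Leffler identification and the phase bookkeeping into $c_a^m$ --- goes through as in the paper.
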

\begin{proof} Denote $\mathcal{L}(K_{a}^{m}(x,y,t))=H_{1}(s)H_{2}(s)$ where \begin{eqnarray*}H_{1}(s)&=&\frac{1}{(u_{R}-e^{i\varrho})^{\lambda+1}}\cdot\frac{1}{(u_{R}-e^{-i\varrho})^{\lambda+1}}, \\H_{2}(s)&=&2^{2\lambda/a}\Gamma\biggl(\frac{2\lambda+a}{a}\biggr)\frac{1}{r}\biggl(\frac{1}{R}\biggr)^{2\lambda/a}(1-u_{R}^{2}).\end{eqnarray*}
By direct computation, we have
\[H_{1}(s)=e^{-i\frac{2\pi(\lambda+1)}{a}}\biggl(\biggl(\frac{a}{2}\biggr)^{2/a}z^{-1}\biggr)^{2(\lambda+1)}\biggl[\frac{1}{(\varpi^{2/a}-b_{+})^{\lambda+1}}\cdot\frac{1}{(\varpi^{2/a}-b_{-})^{\lambda+1}}\biggr]\]
with $\varpi=r-s$.

Using the generalized  Mittag-Leffler function, we have
\begin{eqnarray} \label{leff}\mathcal{L}^{-1}\biggl(\frac{1}{(s^{2/a}-b)^{\lambda+1}}\biggr)=t^
{\frac{2}{a}(\lambda+1)-1}E^{\lambda+1}_{\frac{2}{a}, \frac{2}{a}(\lambda+1)}(bt^{\frac{2}{a}}).\end{eqnarray}
Now by the inverse Laplace transform formula from \cite{PB}
\begin{eqnarray}\label{lb}\mathcal{L}^{-1}\biggl(\frac{(\sqrt{s^2+a^2}-s)^{\nu}}{\sqrt{s^2+a^2}}F(\sqrt{s^2+a^2}-s)\biggr)=(a^2t)^{\nu/2}\int_{0}^{t}(t+2\tau)^{-\nu/2}J_{\nu}(a\sqrt{t^2+2\tau t})f(\tau)d\tau\end{eqnarray}
where $\mathcal{L}(f(t))=F(s),$ ${\rm Re} \,\nu>-1$ and ${\rm Re} \, s>|{\rm Im}\, a|$ and the Laplace convolution formula (\ref{lc}), we get the result. \end{proof}
 \section{Dunkl kernel associated to the dihedral group}
 \label{dunklFT}

\subsection{Integral expression of the kernel}
The dihedral group $I_{k}$ is the group of symmetries of the regular $k$-gon.   We use complex coordinates $z_{0}=x+iy$ and identify $\mathbb{R}^{2}$ with $\mathbb{C}$. For a fixed $k$ and $j\in\{0,1,\cdots,k-1\}$, the rotations in $I_{k}$ consist of $z_{0}\rightarrow z_{0}e^{2ij\pi/k}$ and the reflections in $I_{k}$ consist of $z_{0}\rightarrow \bar{z}_{0}e^{2ij\pi/k}$. In particular, we have $I_{1}=\mathbb{Z}_{2}$ and $I_{2}=\mathbb{Z}_{2}^{2}$. The weight function associated with $I_{2k}$  and $\kappa=(\alpha,\beta)$ is given by
\[\upsilon_{\kappa}(z_{0})=\biggl|\frac{z_{0}^{k}-\overline{z_{0}}^{k}}{2i}\biggr|^{2\alpha}\biggl|\frac{z_{0}^{k}+\overline{z_{0}}^{k}}{2}\biggr|^{2\beta}.
\]
The weight function $\upsilon_{\kappa}(z_{0})$ associated with the group $I_{k}$, when $k$ is an odd integer, is the same as the weight function $\upsilon_{(\alpha,\beta)}(z_{0})$ associated with the group $I_{2k}$ with $\beta=0$, i.e.
\[\upsilon_{\kappa}(z_{0})=\biggl|\frac{z_{0}^{k}-\overline{z_{0}}^{k}}{2i}\biggr|^{2\alpha}.\]
We also put $P_{j}(G;x,y)$ the reproducing kernel of $\mathcal{H}_{j}(\upsilon_{\kappa})$ and by $P(G;x,y)$ the Poisson kernel, see (\ref{pd}) and (\ref{pois}).
We denote by \[d\mu_{\gamma}(w)=c_{\gamma}(1+w)(1-w^2)^{\gamma-1}dw\] with $c_{\gamma}=[B(\frac{1}{2}, \gamma)]^{-1}.$  It was proved that finding a closed formula of the Poisson kernel which reproduces any $h$-harmonic in the disk reduces to the cases $k=1$ and $k=2$, see \cite{D, DX}.
\begin{theorem}\cite{DX}
\label{DunklPoisson}
For each weight function $\upsilon_{\kappa}(z)$ associated with the group $I_{2k}$, the Poisson kernel is given by
\[P(I_{2k}; z_{1}, z_{2})=\frac{1-|z_{1}|^{2}|z_{2}|^{2}}{1-|z_{1}^{k}|^{2}|\overline{z_{2}}^{k}|^{2}} \frac{|1-z_{1}^{k}\overline{z_{2}}^{k}|^{2}}{|1-\overline{z_{1}}z_{2}|^{2}} P(I_{2}; z_{1}^{k}, z_{2}^{k}),\]
where the Poisson kernel $P(I_{2}; z_{1}, z_{2} )$ associated with $\upsilon_{\kappa}(x+iy)=|y|^{2\alpha}|x|^{2\beta}$ is given by
\begin{eqnarray*} P(I_{2}; z_{1}, z_{2})&=&
\int_{-1}^{1}\int_{-1}^{1}\frac{1-|z_{1}z_{2}|^{2}}{[1-2({\rm Im} \,  z_{1})({\rm Im} \,  z_{2})u -2({\rm Re} \, z_{1}) ({\rm Re} \, z_{2}) v+|z_{1}z_{2}|^{2} ]^{\alpha+\beta+1}}d\mu_{\alpha}(u)d\mu_{\beta}(v).
\end{eqnarray*}
For each weight function $\upsilon_{\kappa}(z)$ associated with  odd-$k$ dihedral group $I_{k}$, the Poisson kernel is given by
\[P(I_{k};z_{1}, z_{2})=\frac{1-|z_{1}|^2|z_{2}|^2}{1-|z_{1}^{k}|^2|\overline{z_{2}}^{k}|^2}\frac{|1-z_{1}^{k}\overline{z_{2}}^{k}|^{2}}{|1-z_{1}\overline{z_{2}}|^2}P(I_{1};z_{1}^{k},z_{2}^{k})
\]
where the Poisson kernel $P(I_{1}; z_{1}, z_{2}) $ associated with $\upsilon_{\kappa}(x+iy)=|y|^{2\alpha}$  is given by
\[P(I_{1};z_{1},z_{2})=\int_{-1}^{1}\frac{1-|z_{1}z_{2}|^2}{(1-2({\rm Im}\, z_{1})({\rm Im}\, z_{2})u-2({\rm Re}\, z_{1})({\rm Re} \,z_{2})+|z_{1}z_{2}|^2)^{\alpha+1}}d\mu_{\alpha}(u).
\]
\end{theorem}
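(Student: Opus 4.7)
The plan is to compute $V_{\kappa}$ applied to the Poisson-type function $(1-|y|^2)(1-2\langle\cdot,y\rangle+|y|^2)^{-(\lambda_\kappa+1)}$ from (\ref{pois}) directly, by first treating the simplest cases $I_2=\mathbb{Z}_2^2$ and $I_1=\mathbb{Z}_2$, and then lifting to $I_{2k}$ (or odd $I_k$) via the holomorphic map $z\mapsto z^k$.

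For the base case $I_2$, R\"osler's theorem gives the intertwining operator for $\mathbb{Z}_2^m$ as a product of one-dimensional $\mathbb{Z}_2$ intertwiners of the form $V_\alpha f(x)=c_\alpha\int_{-1}^{1}f(xt)(1+t)(1-t^2)^{\alpha-1}dt$. I would plug this product representation directly into the Poisson expression, expand the inner product $\langle(x_1v,x_2u),(y_1,y_2)\rangle=x_1vy_1+x_2uy_2$, and read off the stated double integral for $P(I_2;z_1,z_2)$ immediately. The case $I_1=\mathbb{Z}_2$ works the same way with one integral.

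The key step is the reduction from $I_{2k}$ to $I_2$. In polar coordinates, the $I_{2k}$ action $\theta\mapsto\theta+\pi j/k$ and $\theta\mapsto-\theta+\pi j/k$ becomes the $I_2$ action after substituting $\phi=k\theta$, and the multiplicity is constant along each reflection orbit, so an orthonormal basis of $h$-harmonics for $I_{2k}$ can be built from Jacobi polynomials in $\cos(k\theta)$ times powers of $r$. Starting from the expansion (\ref{pois}) of $P(I_{2k};z_1,z_2)$ into reproducing kernels of $\mathcal{H}_j(\upsilon_\kappa)$, I would split the sum by the residue of the degree modulo $k$: the degrees divisible by $k$ correspond to polynomials in $z^k,\bar z^k$ and sum via the identification above to $P(I_2;z_1^k,z_2^k)$, while the remaining $k-1$ residue classes telescope through a Gegenbauer-type summation into the rational prefactor $\frac{1-|z_1|^2|z_2|^2}{1-|z_1|^{2k}|z_2|^{2k}}\cdot\frac{|1-z_1^k\bar z_2^k|^2}{|1-\bar z_1 z_2|^2}$. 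The odd-$k$ case for $I_k$ proceeds identically, reducing to $I_1$.

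The main obstacle I expect is making the residue-class splitting rigorous: one must interchange the infinite sum with the intertwiner $V_\kappa$, which requires uniform convergence on compact subsets of $\{|z_1|\le 1,\ |z_2|<1\}$, and then one must identify the "non-divisible" residue contributions cleanly as the explicit rational prefactor. An alternative route I would consider, avoiding the harmonic-analysis step entirely, is to verify the stated formula directly by checking that it satisfies the Dunkl--Laplace equation $\Delta_\kappa P(\cdot,z_2)(z_1)=0$ together with the correct boundary behavior as $|z_1|\to 1$; this reduces the problem to a tedious but routine differentiation using the fact that the divided-difference part of each $T_i$ on a function of $z^k$ acts like the analogous $I_2$-Dunkl operator on $z^k$.
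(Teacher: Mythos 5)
First, a point of comparison: the paper does not prove Theorem \ref{DunklPoisson} at all --- it is imported verbatim from Dunkl and Xu \cite{DX} (going back to Dunkl's 1989 paper \cite{D}), so there is no in-paper proof to measure you against; your attempt has to be judged against the argument in those references. Your base cases are fine: for $I_{1}=\mathbb{Z}_{2}$ and $I_{2}=\mathbb{Z}_{2}^{2}$ the intertwining operator is an explicit product of one-dimensional beta integrals, and substituting it into the expansion (\ref{pois}) with $\lambda_{\kappa}=\alpha+\beta$ does yield the stated single and double integrals.

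The gap is in the reduction from $I_{2k}$ to $I_{2}$. The theorem asserts a \emph{multiplicative} factorization, $P(I_{2k};z_{1},z_{2})=(\text{rational prefactor})\cdot P(I_{2};z_{1}^{k},z_{2}^{k})$, whereas your plan produces an \emph{additive} splitting $\sum_{j\equiv 0\,(k)}P_{j}+\sum_{j\not\equiv 0\,(k)}P_{j}$; you give no mechanism by which a residue-class decomposition recombines into a product, and it does not do so on its own. Moreover the partial sum over degrees divisible by $k$ is not $P(I_{2};z_{1}^{k},z_{2}^{k})$: even granting that $h$-harmonics of $I_{2}$ pull back under $z\mapsto z^{k}$ to $h$-harmonics of $I_{2k}$ (itself a theorem of Dunkl, not an observation about holomorphy), the \emph{reproducing kernels} do not pull back, since the normalizations are taken with respect to different weights on the circle. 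The phrase ``telescope through a Gegenbauer-type summation into the rational prefactor'' is exactly where the entire content of the theorem sits: in Dunkl's proof the closed form comes from writing the degree-$n$ reproducing kernel explicitly in a Jacobi-polynomial basis (organized, as you suggest, by residues of the degree) and then summing the resulting bilinear series with Bailey's formula for the Poisson kernel of Jacobi expansions; nothing in your outline supplies that input. Your fallback --- verifying $\Delta_{\kappa}P(\cdot,z_{2})=0$ together with boundary behavior --- is also not routine: the Dunkl Laplacian has no Leibniz rule because of the difference terms, so harmonicity of the product of the prefactor with $P(I_{2};z_{1}^{k},z_{2}^{k})$ cannot be checked factor by factor, and you would additionally need a uniqueness statement for the $h$-Poisson kernel with prescribed boundary data to conclude.
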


In the following, we write $z_{1}=|z_{1}|\omega, z_{2}=|z_{2}|\eta \in \mathbb{C}$ and $b=|z_{1}||z_{2}|$. Based on the $\mathfrak{sl}_{2}$  relation of $\Delta_{\kappa}, |x|^2$ and the Euler operator,   an orthonormal basis of $L^{2}(\mathbb{R}^{m}, \upsilon_{\kappa}(x)dx)$ for the general Dunkl case and a series expansion of the Dunkl kernel was constructed in \cite{Said, SKO}. In particular,
 the Dunkl kernel $E_{\kappa}(z_{1}, z_{2})=B_{\kappa, 2}(x,y)$ associated with the dihedral group $I_{k}$ has the following series expansion (see also Theorem \ref{the1})
\begin{eqnarray}\label{fe1}
E_{\kappa}(z_{1}, z_{2})= 2^{\langle \kappa \rangle}\Gamma(\langle \kappa\rangle +1)\sum_{j=0}^{\infty}(-i)^{j} b^{-\langle \kappa \rangle} J_{j+\langle \kappa \rangle}(b) P_{j}(I_{k};\omega,\eta)
\end{eqnarray}
with  \[\langle \kappa \rangle=\left\{
                                                                                  \begin{array}{ll}
                                                                                    (\alpha+\beta)k/2, & \hbox{when $k$ is even;} \\
                                                                                    k\alpha, & \hbox{ when $k$ is odd.}
                                                                                  \end{array}
                                                                                \right.\] We introduce an auxiliary  variable $t$  in (\ref{fe1}) as follows \begin{eqnarray*}
E_{\kappa}(z_{1}, z_{2}, t)= 2^{\langle \kappa \rangle}\Gamma(\langle \kappa\rangle +1)\sum_{j=0}^{\infty}(-i)^{j} b^{-\langle \kappa \rangle} J_{j+\langle \kappa \rangle}(bt) P_{j}(I_{k};\omega,\eta).
\end{eqnarray*}
Then fixing $z_{1}, z_{2}\in \mathbb{C}$, we take the Laplace transform with respect to $t$. Using (\ref{l1}),
$r=(s^2+b^2)^{1/2}$ and $R=s+r$,  for ${\rm Re} \,s$ big enough, we have
\begin{eqnarray*}
\mathcal{L}(E_{\kappa}(z_{1}, z_{2}, t))\nonumber&=&\frac{2^{\langle \kappa \rangle}\Gamma(\langle \kappa\rangle +1)}{rR^{\langle \kappa\rangle}}\sum_{j=0}^{\infty} \biggl(\frac{-ib}{R}\biggr)^{j} P_{j}(I_{k}; \omega,\eta)\\&=&\frac{2^{\langle \kappa \rangle}\Gamma(\langle \kappa\rangle +1)}{rR^{\langle \kappa \rangle}}P\biggl(I_{k}; \omega, \frac{-ib}{R}\eta\biggr)
\end{eqnarray*}
where $P(I_{k}; \omega, z_{0}\eta),$ $|z_{0}|<1$ is the analytic continuation of the Poisson kernel $P(I_{k}; \omega, b\eta)$ obtained by acting with the intertwining operator $V_{\kappa}$ on $x$ on both sides of (\ref{ac2}).

In order to get the integral expression of the Dunkl kernel, we first denote and simplify
 \begin{eqnarray*}f_{I_{2k}}(s)&=&\frac{2^{k(\alpha+\beta)}}{rR^{k(\alpha+\beta)}}\frac{1-(\frac{-ib}{ R})^2}{1-(\frac{-ib}{ R})^{2k}}\frac{1-2(\frac{-ib}{ R})^{k}{\rm Re}\,(\omega^{k}\bar{\eta}^{k})+(\frac{-ib}{ R})^{2k}}{1-2{\rm Re}\,(\omega\bar{\eta})(\frac{-ib}{ R})+(\frac{-ib}{ R})^2}\\
&&\times \frac{1-(\frac{-ib}{ R})^{2k}}{(1-2(\frac{-ib}{ R})^{k}(({\rm Im}\, \omega^{k})({\rm Im} \,\eta^{k})u+({\rm Re}\, \omega^{k})({\rm Re} \, \eta^{k})v)+(\frac{-ib}{ R})^{2k})^{\alpha+\beta+1}}\\
&=& \frac{2^{k(\alpha+\beta)}}{r}\frac{\displaystyle \biggl(R+\frac{b^2}{R}\biggr)\biggl(R^{k}-2(-ib)^{k}{\rm Re}\,(\omega^{k}\bar{\eta}^{k})+\displaystyle \frac{(-ib)^{2k}}{R^{k}}\biggr)}{\displaystyle R-2(-ib){\rm Re}\,(\omega\bar{\eta})+\frac{(-ib)^2}{R}}
\\&&\times\frac{1}{\displaystyle \biggl(R^{k}-2(-ib)^{k}(({\rm Im} \,\omega^{k})({\rm Im} \,\eta^{k})u+({\rm Re}\, \omega^{k})({\rm Re}\, \eta^{k})v)+\frac{(-ib)^{2k}}{R^{k}} \biggr)^{\alpha+\beta+1}}
\end{eqnarray*}
and
\begin{eqnarray*}
g_{I_{k}}(s)&=&\frac{2^{k\alpha}}{rR^{k\alpha}}\frac{1-(\frac{-ib}{ R})^2}{1-(\frac{-ib}{ R})^{2k}}\frac{1-2(\frac{-ib}{ R})^{k}\mbox{Re}\,(\omega^{k}\bar{\eta}^{k})+(\frac{-ib}{ R})^{2k}}{1-2\mbox{Re}\,(\omega\bar{\eta})(\frac{-ib}{ R})+(\frac{-ib}{ R})^2}
\\&&\times\frac{1-(\frac{-ib}{ R})^{2k}}{(1-2(\frac{-ib}{ R})^{k}((\mbox{Im}\, \omega^{k})(\mbox{Im} \, \eta^{k})u+(\mbox{Re}\, \omega^{k})(\mbox{Re}\, \eta^{k}))+(\frac{-ib}{ R})^{2k})^{\alpha+1}}
\\&=& \frac{2^{k\alpha}}{r}\frac{\displaystyle \biggl(R+\frac{b^2}{R}\biggr)\biggl(R^{k}-2(-ib)^{k}\mbox{Re}\,(\omega^{k}\bar{\eta}^{k})+\displaystyle \frac{(-ib)^{2k}}{R^{k}}\biggr)}{\displaystyle R-2(-ib)\mbox{Re}\,(\omega\bar{\eta})+\frac{(-ib)^2}{R}}
\\&&\times\frac{1}{\displaystyle \biggl(R^{k}-2(-ib)^{k}((\mbox{Im} \,\omega^{k})(\mbox{Im} \,\eta^{k})u+(\mbox{Re}\, \omega^{k})(\mbox{Re}\, \eta^{k}))+\frac{(-ib)^{2k}}{R^{k}} \biggr)^{\alpha+1}}.\end{eqnarray*}
By $R=s+r=s+\sqrt{s^2+b^2}$ and $\displaystyle\frac{1}{R}=\frac{1}{s+\sqrt{s^2+b^2}}=\frac{\sqrt{s^2+b^2}-s}{b^2}$, we get \begin{eqnarray*}&&R+\frac{b^2}{R}=s+r+b^2\frac{r-s}{b^2}=2r \\&&R+\frac{(-ib)^2}{R}=s+r-b^2\frac{r-s}{b^2}=2s \end{eqnarray*} and
\[
R^{k}+\frac{(-ib)^{2k}}{R^{k}}=(s+r)^{k}+(-1)^{k}(r-s)^{k}=\sum_{j=0}^{k}\binom{k}{j}(1+(-1)^{k+j})s^{j}r^{k-j}
\]
which means that $R^{k}+\frac{(-ib)^{2k}}{R^{k}}$ is always a polynomial in $s$ as $k$ is a positive integer.
 We can  apply Lemma \ref{lem1} because $|({\rm Im} \,\omega^{k})({\rm Im} \,\eta^{k})u+({\rm Re}\, \omega^{k})({\rm Re}\, \eta^{k})v|\le 1$, for $u,v \in [-1, 1]$. Hence $f_{I_{2k}}(s)$ and $ g_{I_{k}}(s)$ have the following factorization,
\begin{lemma}\label{lem3} Let \[A(s,q)=\prod_{l=0}^{k-1}\biggl(s+ib\cos\biggl(\frac{q+2\pi l}{k}\biggr)\biggr),\] \[B(s)=(s+ib{\rm Re}\,(\omega\bar{\eta})).\]
Then $f_{I_{2k}}(s)$ has the following factorization
\begin{eqnarray*}
f_{I_{2k}}(s)&=&\frac{A(s,q(1,1))}{B(s)[A(s,q(u,v))]^{\alpha+\beta+1}}
=\frac{1}{B(s)[A(s,q(u,v))]^{\alpha+\beta}}+\frac{(-ib)^{k}\cos(q(u-1,v-1))}{2^{k-1}B(s)[A(s,q(u,v))]^{\alpha+\beta+1}},
\end{eqnarray*}
 and $g_{I_{k}}(s)$ has the following factorization\begin{eqnarray*}g_{I_{k}}(s)&=&\frac{A(s,q(1,1))}{B(s)[A(s,q(u,1))]^{\alpha+1}}
=\frac{1}{B(s)[A(s,q(u,1))]^{\alpha}}+\frac{(-ib)^{k}\cos(q(u-1,0))}{2^{k-1}B(s)[A(s,q(u,1))]^{\alpha+1}}.
\end{eqnarray*}
where  $q(u, v)=\arccos(({\rm Im} \,\omega^{k})({\rm Im} \,\eta^{k})u+({\rm Re}\, \omega^{k})({\rm Re}\, \eta^{k})v)$.

\end{lemma}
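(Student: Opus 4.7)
The plan is to derive both factorisations by reducing each denominator factor of the form $R^{k}-2(-ib)^{k}T+(-ib)^{2k}/R^{k}$ to a polynomial in $s$ that fits the pattern of Lemma~\ref{lem1}, and then splitting the resulting rational function by an elementary algebraic identity. Because the outer factor of $g_{I_{k}}(s)$ is obtained from that of $f_{I_{2k}}(s)$ by setting $v=1$ and $\beta=0$, I will focus on $f_{I_{2k}}(s)$ and observe at the end that $g_{I_{k}}(s)$ follows by the same argument.

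First I would use the identity $R(r-s)=b^{2}$ (from $R=s+r$ and $r^{2}=s^{2}+b^{2}$), which gives $R+b^{2}/R=2r$, $R+(-ib)^{2}/R=2s$ and more generally
\[
R^{k}+\frac{(-ib)^{2k}}{R^{k}}=(s+r)^{k}+(-1)^{k}(r-s)^{k}.
\]
Specialising to $k=1$ rewrites the small denominator factor as $2B(s)$. For the higher-order factors I would apply Lemma~\ref{lem1} with $n=k$ and with the rescaling $nz^{1/n}=b$ (so that the $z$-parameter of that lemma is chosen as $(b/k)^{k}$); under this rescaling $e^{-ik\pi/2}k^{k}z=(-ib)^{k}$, so with $\xi=T$ the lemma produces
\[
R^{k}-2(-ib)^{k}T+\frac{(-ib)^{2k}}{R^{k}}=2^{k}A\bigl(s,\arccos(T)\bigr).
\]
Applied with $T={\rm Re}\,(\omega^{k}\bar{\eta}^{k})$ this turns the numerator into $2^{k}A(s,q(1,1))$, and with $T=({\rm Im}\,\omega^{k})({\rm Im}\,\eta^{k})u+({\rm Re}\,\omega^{k})({\rm Re}\,\eta^{k})v$ (which lies in $[-1,1]$ for $u,v\in[-1,1]$, so the arccos is well defined) it turns the $(\alpha+\beta+1)$th power in the denominator into $[2^{k}A(s,q(u,v))]^{\alpha+\beta+1}$. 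Collecting all powers of $2$ (the prefactor $2^{k(\alpha+\beta)}$, the $2r/r$ from the first ratio, the $1/2$ from $2B(s)$ and the $2^{k}/2^{k(\alpha+\beta+1)}$ from Lemma~\ref{lem1}) gives exact cancellation and yields the first claimed equality for $f_{I_{2k}}(s)$.

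For the second equality I would clear the common denominator; the claim reduces to the algebraic identity
\[
A(s,q(1,1))-A(s,q(u,v))=\frac{(-ib)^{k}\cos(q(u-1,v-1))}{2^{k-1}}.
\]
Using the explicit expression $2^{k}A(s,q)=(s+r)^{k}+(-1)^{k}(r-s)^{k}-2\cos(q)(-ib)^{k}$ established in the proof of Lemma~\ref{lem1}, the $s$-dependent parts cancel and only the $\cos(q)$ contributions remain; the bilinearity of $({\rm Im}\,\omega^{k})({\rm Im}\,\eta^{k})u+({\rm Re}\,\omega^{k})({\rm Re}\,\eta^{k})v$ in $(u,v)$ immediately gives $\cos(q(1,1))-\cos(q(u,v))=-\cos(q(u-1,v-1))$, which closes the identity. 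Setting $v=1$ and $\beta=0$ throughout yields the analogous statement for $g_{I_{k}}(s)$.

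The main obstacle is essentially organisational: choosing the rescaling $nz^{1/n}=b$ so that Lemma~\ref{lem1} applies verbatim, and keeping careful track of the many factors of $2$ together with the $\arccos$ conventions defining $q(u,v)$. Once the identification of the $k$th-order cosine polynomial with the output of Lemma~\ref{lem1} is in place, every remaining step is a short algebraic manipulation.
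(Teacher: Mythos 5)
Your proposal is correct and follows essentially the same route as the paper: the identity $R(r-s)=b^{2}$, the factorization from Lemma~\ref{lem1} under the rescaling $nz^{1/n}=b$ (which the paper invokes in the discussion immediately preceding the lemma), the one-line expansion showing $\arccos({\rm Re}\,(\omega^{k}\bar{\eta}^{k}))=q(1,1)$, and the linearity of $\cos(q(u,v))$ in $(u,v)$ for the second equality. Your write-up is merely more explicit about the bookkeeping of the powers of $2$ that the paper leaves to the reader.
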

\begin{proof} For the first equality, we only need to show that $q(1, 1)= \arccos({\rm Re}\,(\omega^{k}\bar{\eta}^{k}))$, i.e. \[{\rm Re}\,(\omega^{k}\bar{\eta}^{k})=({\rm Im} \,\omega^{k})({\rm Im} \,\eta^{k})+({\rm Re}\, \omega^{k})({\rm Re}\, \eta^{k})\]
which follows by expanding the left-hand side.
For the second equality, we have used
\[2^{k}A(s,q(u,v))= R^{k}-2(-ib)^{k}((\mbox{Im} \,\omega^{k})(\mbox{Im} \,\eta^{k})u+(\mbox{Re}\, \omega^{k})(\mbox{Re}\, \eta^{k}))+\frac{(-ib)^{2k}}{R^{k}}.\]
\end{proof}
Now, we have our first main result in this section
\begin{theorem}\label{ld1} For the even dihedral group $I_{2k}$, the radial Laplace transform of the Dunkl kernel is
\begin{eqnarray*}
\mathcal{L}(E_{\kappa}(z_{1}, z_{2},t))&=&\Gamma(k(\alpha+\beta) +1)
\int_{-1}^{1}\int_{-1}^{1}f_{I_{2k}}(s)d\mu_{\alpha}(u)d\mu_{\beta}(v).
\end{eqnarray*}
For odd-$k$ dihedral group $I_{k}$, the radial Laplace transform of the Dunkl kernel $E_{\kappa}(z_{1}, z_{2}, t)$ is
\begin{eqnarray*}
\mathcal{L}(E_{\kappa}(z_{1}, z_{2}, t))&=&\Gamma(k\alpha +1)
\int_{-1}^{1}
g_{I_{k}}(s)d\mu_{\alpha}(u).
\end{eqnarray*}
\end{theorem}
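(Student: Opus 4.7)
The plan is to follow the Laplace-transform strategy already set up for the radial case and simply feed it through the explicit Poisson kernel of Theorem \ref{DunklPoisson}. The hard work has essentially been pre-staged by Lemma \ref{lem3}, so the job is to identify the Laplace domain expression with the integrand $f_{I_{2k}}$ (resp.\ $g_{I_k}$) of that lemma.

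First I would take the Laplace transform of $E_{\kappa}(z_1,z_2,t)$ term-by-term in the series expansion with auxiliary variable $t$, using the Bessel transform formula (\ref{l1}). This is justified in complete analogy with the argument used after (\ref{rl}) in the radial setting: each summand $(-i)^{j}b^{-\langle\kappa\rangle}J_{j+\langle\kappa\rangle}(bt)P_{j}(I_{k};\omega,\eta)$ admits a Laplace transform in a common half-plane $\mathrm{Re}\,s$ large enough, and the absolute values are dominated by a convergent series because the reproducing kernels $P_{j}$ are polynomially bounded in $j$ on the sphere while the factor $(b/R)^{j}$ decays geometrically for $\mathrm{Re}\,s$ sufficiently large. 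The result is the closed form
\[
\mathcal{L}(E_{\kappa}(z_{1}, z_{2}, t))=\frac{2^{\langle \kappa \rangle}\Gamma(\langle \kappa\rangle +1)}{rR^{\langle \kappa \rangle}}\,P\!\left(I_{k}; \omega, \tfrac{-ib}{R}\eta\right),
\]
where the Poisson kernel is evaluated at the complex argument $-ib/R$. The validity of this evaluation rests on the analytic continuation argument already given in the excerpt for (\ref{ac1})--(\ref{ac2}), now applied inside the integral representations of $V_{\kappa}$; since $|{-}ib/R|<1$ for $\mathrm{Re}\,s$ large enough, the power series in $(-ib/R)$ defining the Poisson kernel via (\ref{pois}) converges and the analytic continuation is unambiguous.

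Next I would substitute the explicit formula from Theorem \ref{DunklPoisson}. For $I_{2k}$ the substitution produces a double integral over $d\mu_{\alpha}(u)d\mu_{\beta}(v)$ of a rational function in $R$, and for odd $I_{k}$ a single integral over $d\mu_{\alpha}(u)$. I would then clear the denominators of the form $R^{k}+(-ib)^{2k}/R^{k}\pm\cdots$ by multiplying numerator and denominator by appropriate powers of $R$, and simplify the leading prefactor using the two elementary identities $R+b^{2}/R=2r$ and $R-b^{2}/R=2s$ (the latter being $R+(-ib)^{2}/R$ up to sign). This collapses the pre-integral factor $2^{\langle\kappa\rangle}/(rR^{\langle\kappa\rangle})$ against the Poisson kernel into precisely the expressions $f_{I_{2k}}(s)$ and $g_{I_{k}}(s)$ as written in the excerpt immediately preceding Lemma \ref{lem3}.

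Finally I would interchange the Laplace transform with the integrations against $d\mu_{\alpha}$ and $d\mu_{\beta}$. This is harmless: both are finite probability measures on $[-1,1]$, and the integrand is uniformly bounded on the integration domain for $\mathrm{Re}\,s$ in the convergence half-plane, so Fubini applies and the integrations can be moved outside $\mathcal{L}$. The main technical obstacle is really just the bookkeeping in the algebraic simplification of step three: keeping track of the powers of $R$, the signs from $(-i)^{k}$ versus $(-ib)^{k}$, and matching the three grouped factors of the Poisson kernel (the outer $1-|z_{1}z_{2}|^{2}$ factor, the $I_{2}$- or $I_{1}$-part obtained by substitution $z_{i}\mapsto z_{i}^{k}$, and the ratio $|1-z_{1}^{k}\overline{z_{2}}^{k}|^{2}/|1-\overline{z_{1}}z_{2}|^{2}$) against the three factors appearing in $f_{I_{2k}}$ or $g_{I_k}$. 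Once this identification is in place the statement of Theorem \ref{ld1} is immediate.
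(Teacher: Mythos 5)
Your proposal is correct and follows essentially the same route as the paper: term-by-term Laplace transform of the series via (\ref{l1}) to produce the Poisson kernel evaluated at $-ib/R$, then substitution of the explicit dihedral Poisson kernel from Theorem \ref{DunklPoisson} and simplification with $R+b^{2}/R=2r$ and $R+(-ib)^{2}/R=2s$ to identify $f_{I_{2k}}$ and $g_{I_{k}}$. The only additions you make are explicit justifications (domination for the term-by-term transform, Fubini for pulling the $d\mu_{\alpha},d\mu_{\beta}$ integrals outside $\mathcal{L}$) that the paper leaves implicit.
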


For any dihedral group, when the multiplicity function $\kappa$ takes integer values, we know from Lemma \ref{lem3} that $f_{I_{2k}}(s)$ and $g_{I_{k}}(s)$ are rational functions. So then the Dunkl kernel can be obtained by the inverse Laplace transform through partial fraction decomposition using Theorem \ref{ld1} and \ref{pfd}.
\begin{remark} It is known that the Dunkl kernel for positive integer $\kappa$ can in principle be expressed as elementary functions, see  \cite{O1} and \cite{DJ}. However, this is not made concrete there. In \cite{DDY}, the authors use the shift principle of \cite{O1} and act with multiple combinations of the Dunkl operators on the Dunkl Bessel function to derive the Dunkl kernel in the dihedral setting. However, there the Dunkl Bessel function was only known in a few cases. In subsection 4.2, we will give the integral expression of the generalized Bessel function using the Laplace transform. Also, acting multiple combinations of the Dunkl operators turns out  not to be feasible in practice.
\end{remark}

When the multiplicity function $\kappa$ is not integer valued, we can still derive integral formulas for the kernel using Theorem \ref{ld1}.
First denote
\begin{eqnarray*}g_{\alpha}(t, q(u,v))&=&\mathcal{L}^{-1}\biggl(\frac{1}{A(s,q(u,v))^{\alpha}}\biggr)=\mathcal{L}^{-1}\biggl(\frac{1}{\prod_{l=0}^{k-1}\biggl(s+ib\cos\biggl(\frac{q(u,v)+2\pi l}{k}\biggr)\biggr)^{\alpha}}\biggr)
\\&=&\mathcal{L}^{-1}\biggl(\frac{2^{k\alpha}}{
\biggl(R^{k}-2(-ib)^{k}(({\rm Im} \,\omega^{k})({\rm Im} \,\eta^{k})u+({\rm Re}\, \omega^{k})({\rm Re}\, \eta^{k})v)+\frac{(-ib)^{2k}}{R^{k}} \biggr)^{\alpha}}\biggr)
\\&=&\mathcal{L}^{-1}\biggl(\frac{2^{k\alpha-1}e^{i k\alpha\pi}\varpi_{0}^{k\alpha}}{r}\biggl(\frac{b^2}{\varpi_{0}}+\varpi_{0}\biggr)\cdot \frac{1}{(\varpi_{0}^{k}-e^{iq(u,v)}(e^{i\frac{\pi}{2}}b)^{k})^{\alpha}(\varpi_{0}^{k}-e^{-iq(u,v)}(e^{i\frac{\pi}{2}}b)^{k})^{\alpha}}
\biggr)
\end{eqnarray*}
where $\varpi_{0}=r-s$. Using the same method as in Theorem \ref{ga}, by formula (\ref{leff}) and (\ref{lb}),  we have
\begin{eqnarray*}g_{\alpha}(t, q(u,v))&=&2^{k\alpha-1}e^{ik\alpha\pi}b^{k\alpha+1}\int_{0}^{t}\biggl[J_{k\alpha-1}(b\sqrt{t^2-2\tau t})\\&&+t(t+2\tau)^{-1}J_{k\alpha+1}(b\sqrt{t^2+2\tau t})\biggr]t^{\frac{k\alpha-1}{2}}(t+2\tau)^{-\frac{k\alpha-1}{2}}\tilde{h}_{\alpha}(\tau)d\tau,
\end{eqnarray*}
where $\tilde{h}_{\alpha}(t)$ is the convolution of two  generalized Mittag-Leffler functions,
\begin{eqnarray*}
\tilde{h}_{\alpha}(t)=\int_{0}^{t}\zeta^
{k\alpha-1}E^{\alpha}_{k, k\alpha}(e^{iq(u,v)}(e^{i\frac{\pi}{2}}b)^{k}\zeta^{k})
(t-\zeta)^{k\alpha-1}E^{\alpha}_{k, k\alpha}(e^{-iq(u,v)}(e^{i\frac{\pi}{2}}b)^{k}(t-\zeta)^{k})d\zeta.
\end{eqnarray*}
Now, by the convolution formula (\ref{lc}), we have
\begin{theorem} \label{m1} Let $a_{u,v}^{l}$ be the $k+1$ roots of $B(s)A(s,q(u,v))$, i.e. $a^{l}_{u,v}=\displaystyle -ib\cos\biggl(\frac{q+2\pi l}{k}\biggr)$, $l=0,\cdots, k-1$ and $a^{k}_{u,v}=-ib {\rm Re}\,(\omega \bar{\eta})$.  Then for each dihedral group $I_{2k}$ and positive multiplicity function $\kappa$, the Dunkl kernel is given by
\begin{eqnarray*}
E_{\kappa}(z_{1}, z_{2})&=&\Gamma(k(\alpha+\beta)+1)
\int_{-1}^{1}\int_{-1}^{1}\int_{0}^{1}\biggl[\sum_{l=0}^{k}\frac{A(s,q(1,1))(s-a_{u,v}^{l})}{B(s)A(s,q(u,v))}\biggl|_{s=a_{u,v}^{l}}e^{a^{l}_{u,v}(1-\tau)}\biggr]
\\&&g_{\alpha+\beta}(\tau, q(u,v))
 d\tau d\mu_{\alpha}(u)d\mu_{\beta}(v).\end{eqnarray*}
For each odd-$k$ dihedral group $I_{k}$ and positive multiplicity function $\kappa$, the Dunkl kernel is
\[E_{\kappa}(z_{1}, z_{2})=\Gamma(k\alpha +1)\int_{-1}^{1}\int_{0}^{1}\biggl[\sum_{l=0}^{k}\frac{A(s,q(1,1)(s-a_{u,1}^{l}))}{B(s)A(s,q(u,1))}\biggl|_{s=a_{u,1}^{l}}e^{a^{l}_{u,1}(1-\tau)}\biggr]g_{\alpha}(\tau, q(u,1))d\tau d\mu_{\alpha}(u),\]
where  $q(u, v)=\arccos(({\rm Im} \,\omega^{k})({\rm Im} \,\eta^{k})u+({\rm Re}\, \omega^{k})({\rm Re}\, \eta^{k})v)$.
\end{theorem}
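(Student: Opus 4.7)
The plan is to start from Theorem \ref{ld1}, which already expresses $\mathcal{L}(E_\kappa(z_1,z_2,t))$ as an integral over $u,v$ (or just $u$ in the odd case) of the function $f_{I_{2k}}(s)$ (resp.\ $g_{I_k}(s)$). The key idea is to split the $(\alpha+\beta+1)$-fold power of $A(s,q(u,v))$ in Lemma \ref{lem3} into a product of two pieces, invert each via Laplace transform techniques, and stitch them back together using the Laplace convolution formula (\ref{lc}).

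Concretely, for the even dihedral group $I_{2k}$, I would factor
\[
f_{I_{2k}}(s)\;=\;\frac{A(s,q(1,1))}{B(s)\,A(s,q(u,v))}\cdot\frac{1}{[A(s,q(u,v))]^{\alpha+\beta}}.
\]
The first factor is a proper rational function of $s$ (degree $k$ over degree $k+1$) whose denominator has the $k+1$ simple roots $a^l_{u,v}$ listed in the statement, so partial fraction decomposition gives
\[
\frac{A(s,q(1,1))}{B(s)\,A(s,q(u,v))}\;=\;\sum_{l=0}^{k}\frac{1}{s-a_{u,v}^{l}}\cdot\frac{A(s,q(1,1))(s-a_{u,v}^{l})}{B(s)\,A(s,q(u,v))}\bigg|_{s=a_{u,v}^{l}},
\]
and the standard rule $\mathcal{L}^{-1}(1/(s-a))=e^{at}$ turns this into the bracketed sum of exponentials appearing in the theorem. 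The second factor, by the very definition of $g_{\alpha+\beta}(t,q(u,v))$ that was computed using formulas (\ref{leff}) and (\ref{lb}), has inverse Laplace transform $g_{\alpha+\beta}(t,q(u,v))$. Applying the Laplace convolution formula (\ref{lc}) to the product then yields the $\tau$-integral with $t$ replaced by a generic time variable.

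Finally, setting $t=1$ and multiplying by $\Gamma(k(\alpha+\beta)+1)$ (the overall constant sitting outside the integral in Theorem \ref{ld1}) produces the stated formula. The odd case $I_k$ is identical in structure: one invokes the $g_{I_k}(s)$ factorization from Lemma \ref{lem3} with $v$ replaced by $1$, applies the same partial-fraction-plus-convolution procedure, and integrates only against $d\mu_\alpha(u)$.

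The main obstacle I expect is bookkeeping rather than conceptual: one must verify that the $k+1$ roots $a_{u,v}^{l}$ are genuinely distinct for generic $(u,v)$, so that the partial fraction decomposition uses only simple poles (and the residue formula in the theorem is valid); on the measure-zero set where collisions occur, the final formula extends by continuity in $(u,v)$. One also has to justify interchanging the inverse Laplace transform with the $(u,v)$-integration, which follows from uniform $L^1$ bounds on the $s$-variable coming from the explicit factorized form in Lemma \ref{lem3} together with the probability measure character of $d\mu_\alpha\,d\mu_\beta$. Once these points are dispatched, the proof is a direct assembly of Theorem \ref{ld1}, Lemma \ref{lem3}, the residue computation, the identity for $g_\alpha$, and the convolution rule (\ref{lc}).
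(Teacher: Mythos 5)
Your proposal is correct and follows essentially the same route as the paper: the authors likewise split $f_{I_{2k}}(s)$ (resp.\ $g_{I_k}(s)$) into the proper rational factor $\frac{A(s,q(1,1))}{B(s)A(s,q(u,v))}$, inverted by partial fractions over the simple poles $a^l_{u,v}$, times $[A(s,q(u,v))]^{-(\alpha+\beta)}$ whose inverse is $g_{\alpha+\beta}$, and then apply the convolution formula (\ref{lc}) and set $t=1$. Your additional remarks on the genericity of the simple poles and on interchanging the inversion with the $(u,v)$-integration go slightly beyond what the paper records, which only writes out the odd-$k$ case.
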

\begin{proof} We only prove  the odd dihedral group $I_{k}$ cases. We write $g_{I_{k}}$ as

\begin{eqnarray}\label{fi}g_{I_{k}}(s)
=\frac{A(s,q(1,1))}{B(s)[A(s,q(u,1))]}\cdot\frac{1}{[A(s,q(u,1))]^{\alpha}}.\end{eqnarray}
The inverse Laplace transform of the second factor on the right-hand side of (\ref{fi}) is $g_{\alpha}(t, q(u,1))$. The first factor on the right-hand side of  (\ref{fi}) is inversed by partial fraction decomposition. Then by the Laplace convolution formula (\ref{lc}), we get the result.

\end{proof}

Using the second equality in Lemma \ref{lem3}, the integral expression of the Dunkl kernel also reduces to the integral expression of $f_{n,k}(t)$ in Lemma \ref{le1}. Indeed, put
\[h_{\alpha}(t, q(u,v))=\mathcal{L}^{-1}\biggl(\frac{1}{B(s)A(s,q(u,v))^{\alpha}}\biggr)=\mathcal{L}^{-1}\biggl(\frac{1}{(s+ib{\rm Re}\,(\omega\bar{\eta}))\prod_{l=0}^{k-1}\biggl(s+ib\cos\biggl(\frac{q(u,v)+2\pi l}{k}\biggr)\biggr)^{\alpha}}\biggr),\]
which is the convolution  of $g_{\alpha}(t, q(u,v))$ and $e^{-ib{\rm Re}\,(\omega\bar{\eta})}$.
Then we have
\begin{theorem}\label{m2}
 For each dihedral group $I_{2k}$ and positive multiplicity function $\kappa$, the Dunkl kernel is given by
\begin{eqnarray*}
E_{\kappa}(z_{1}, z_{2})&=&\Gamma(k(\alpha+\beta)+1)
\int_{-1}^{1}\int_{-1}^{1}[h_{\alpha+\beta}(1, q(u,v))\\&&+2^{1-k}(-ib)^{k}\cos(q(u-1,v-1))h_{\alpha+\beta+1}(1, q(u,v))]d\mu_{\alpha}(u)d\mu_{\beta}(v).\end{eqnarray*}
For each odd-$k$ dihedral group $I_{k}$ and positive multiplicity function $\kappa$, the Dunkl kernel is
\[E_{\kappa}(z_{1}, z_{2})=\Gamma(k\alpha +1)\int_{-1}^{1}[h_{\alpha}(1, q(u,1))+2^{1-k}(-ib)^{k}\cos(q(u-1,0))h_{\alpha+1}(1, q(u,1))]d\mu_{\alpha}(u),\]
where  $q(u, v)=\arccos(({\rm Im} \,\omega^{k})({\rm Im} \,\eta^{k})u+({\rm Re}\, \omega^{k})({\rm Re}\, \eta^{k})v)$.
\end{theorem}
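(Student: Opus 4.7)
The plan is to derive Theorem \ref{m2} directly from Theorem \ref{ld1} together with the second equality in Lemma \ref{lem3}, followed by term-by-term inverse Laplace transformation. Recall that Theorem \ref{ld1} gives, for the even dihedral group $I_{2k}$,
\[
\mathcal{L}(E_{\kappa}(z_{1}, z_{2},t))=\Gamma(k(\alpha+\beta) +1)\int_{-1}^{1}\int_{-1}^{1}f_{I_{2k}}(s)\,d\mu_{\alpha}(u)d\mu_{\beta}(v),
\]
and similarly for $I_k$ with $k$ odd. Lemma \ref{lem3} then exhibits $f_{I_{2k}}(s)$ as a sum of two pieces, each of which is of the form $1/(B(s)A(s,q(u,v))^{\gamma})$ with $\gamma=\alpha+\beta$ or $\gamma=\alpha+\beta+1$. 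By the very definition of $h_{\gamma}(t,q(u,v))$ given just before the statement, these pieces invert in the Laplace variable to $h_{\alpha+\beta}(t,q(u,v))$ and to $h_{\alpha+\beta+1}(t,q(u,v))$ respectively.

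Next, I would interchange the inverse Laplace transform with the double integral $\int_{-1}^{1}\int_{-1}^{1}(\,\cdot\,)d\mu_{\alpha}(u)d\mu_{\beta}(v)$. The justification is routine: the measures $d\mu_\alpha$, $d\mu_\beta$ are finite on $[-1,1]$, and for fixed $b>0$ the roots $a^l_{u,v}$ of $B(s)A(s,q(u,v))$ are purely imaginary and uniformly bounded in $|u|,|v|\le 1$, so the convolution integrals defining $h_\gamma(t,q(u,v))$ depend continuously on $(u,v)$ and are dominated uniformly on compact $t$-intervals. Thus Fubini's theorem applies. After inverting and bringing $h_{\alpha+\beta}$ and $h_{\alpha+\beta+1}$ inside the integral, one arrives at
\[
E_{\kappa}(z_{1}, z_{2},t)=\Gamma(k(\alpha+\beta)+1)\int_{-1}^{1}\int_{-1}^{1}\bigl[h_{\alpha+\beta}(t, q(u,v))+\tfrac{(-ib)^{k}}{2^{k-1}}\cos(q(u-1,v-1))h_{\alpha+\beta+1}(t, q(u,v))\bigr]d\mu_{\alpha}(u)d\mu_{\beta}(v).
\]
Specializing to $t=1$ then gives the claimed formula. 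The odd-$k$ case is handled identically, starting from the second expression in Theorem \ref{ld1} and the second equality of Lemma \ref{lem3} for $g_{I_{k}}(s)$, with $v$ set to $1$ throughout and the measure $d\mu_\beta$ absent.

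There is no real obstacle beyond the bookkeeping. The main point that needs care is that the splitting in Lemma \ref{lem3} is an algebraic identity in $s$ which is a ratio of polynomials (once one accounts for $R^{k}+(-ib)^{2k}/R^{k}$ being a polynomial in $s$, as already observed before Lemma \ref{lem3}); both summands decay as $s\to\infty$ along a vertical line far enough to the right, so each is genuinely the Laplace transform of a locally integrable function, namely $h_{\alpha+\beta}$ and $h_{\alpha+\beta+1}$. With that noted, the proof is a direct substitution.
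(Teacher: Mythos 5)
Your proposal is correct and follows essentially the same route as the paper: the paper likewise obtains Theorem \ref{m2} by combining Theorem \ref{ld1} with the second equality of Lemma \ref{lem3}, recognizing each summand as $1/(B(s)A(s,q(u,v))^{\gamma})$ up to the constant factor, inverting to $h_{\gamma}(t,q(u,v))$ by definition, and setting $t=1$. Your added Fubini justification for interchanging the inverse Laplace transform with the $(u,v)$-integration is a harmless extra layer of rigor that the paper leaves implicit.
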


Let us now look at a few special cases.
When $k=1$ and any positive $\alpha$, $g_{I_{1}}(s)$ becomes
\begin{eqnarray}\label{k1}g_{I_{1}}(s)&=& \frac{1}{(s+ib((\mbox{Im}\, \omega)(\mbox{Im}\, \eta)u+(\mbox{Re}\, \omega)(\mbox{Re}\, \eta)))^{\alpha+1}}.\end{eqnarray}
We take the inverse Laplace transform of (\ref{k1}) and set $t=1$,   then we reobtain the Dunkl kernel for $I_{1}$, which is
\[E_{ \kappa}(z_{1},z_{2})=\int_{-1}^{1}e^{-\displaystyle i(u \mbox{Im}\,z_{1}\mbox{Im}\,z_{2}+\mbox{Re}\,z_{1} \mbox{Re}\, z_{2} )}d\mu_{\alpha}(u).
\]
It coincides with the known result of the integral representation of the intertwining operator of the rank $1 $ case, for $\mbox{Re} \,\alpha>0$,
\[V_{\alpha}p(x)=\int_{-1}^{1}p(xu)d\mu_{\alpha}(u),
\]
which can be found in \cite{DX}.
Similarly, we reobtain the Dunkl kernel for $I_{2}$, which is
\[E_{\kappa}(z_{1},z_{2})=\int_{-1}^{1}\int_{-1}^{1}e^{-\displaystyle i(u\mbox{Im}\,z_{1}\mbox{Im}\,z_{2}+v\mbox{Re}\,z_{1} \mbox{Re}\, z_{2} )}d\mu_{\alpha}(u)d\mu_{\beta}(v)
\]
which coincides with the result obtained using the intertwining operator for $\mathbb{Z}_{2}^{2}$.

For the dihedral group $I_{3}$ and $I_{6}$, we can get the integral expression of the Dunkl kernels by (\ref{I3}) as both of them are related to the function $f_{3, k}(t)$.

For the dihedral group $I_{4},$  we have
\begin{eqnarray*}f_{I_{4}}(s)=\frac{s^2+b^2\biggl(\frac{1+{\rm Re}\,\omega^2 \bar{\eta}^2}{2}\biggr)}{(s+ib{\rm Re}\, \omega\bar{\eta} )\biggl(s^2+b^2\biggl(\frac{1+({\rm Im}\,\omega^2 )({\rm Im} \,\eta^2) u+({\rm Re}\,\omega^2) ({\rm Re}\, \eta^2) v}{2}\biggr)\biggr)^{\alpha+\beta+1}}.
\end{eqnarray*}
We take the inverse Laplace transform and set $t=1$. We get the Dunkl kernel for $I_{4}$, using Theorem \ref{m1},
\begin{eqnarray*}
E_{\kappa}(z_{1}, z_{2})&=&\frac{\sqrt{\pi}\Gamma(2(\alpha+\beta)+1)}{2^{\alpha+\beta-1/2}\Gamma(\alpha+\beta)}
\int_{-1}^{1}\int_{-1}^{1}\int_{0}^{1}
\frac{1}{\theta_{2}^{2}-\theta_{3}^{2}}\biggl(e^{-ib\theta_{3}\tau}(\theta_{1}^2-\theta_{3}^{2})
+(\theta_{1}^2-\theta_{2}^{2})\biggl(\frac{i\theta_{3}}{\theta_{2}}\sin(b\theta_{2}\tau)-\cos(b\theta_{2}\tau)\biggr)\biggr)\\&&
(1-\tau)^{\alpha+\beta-1/2}\frac{J_{\alpha+\beta-1/2}(b\theta_{2}(1-\tau) )}{(b\theta_{2})^{\alpha+\beta-1/2}}
 d\tau d\mu_{\alpha}(u) d\mu_{\beta}(v),\end{eqnarray*}
or using Theorem \ref{m2},
\begin{eqnarray*}
E_{\kappa}(z_{1}, z_{2})&=&\frac{\sqrt{\pi}\Gamma(2(\alpha+\beta)+1)}{2^{\alpha+\beta-1/2}\Gamma(\alpha+\beta)}
c_{\alpha}c_{\beta}\int_{-1}^{1}\int_{-1}^{1}\int_{0}^{1}
e^{-ib\theta_{3}\tau}\biggl(
(1-\tau)^{\alpha+\beta-1/2}\frac{J_{\alpha+\beta-1/2}(b\theta_{2}(1-\tau) )}{(b\theta_{2})^{\alpha+\beta-1/2}}\\&&+
\frac{b^{2}(\theta_{1}^2-\theta_{2}^{2})}{2(\alpha+\beta)}(1-\tau)^{\alpha+\beta+1/2}\frac{J_{\alpha+\beta+1/2}(b\theta_{2}(1-\tau) )}{(b\theta_{2})^{\alpha+\beta+1/2}}\biggr)
 d\tau d\mu_{\alpha}(u) d\mu_{\beta}(v),\end{eqnarray*}
where $\theta_{1}=\sqrt{\frac{1+({\rm Re}\,\omega^2 \bar{\eta}^2)}{2}}$, $\theta_{2}=\sqrt{\frac{1+({\rm Im}\,\omega^2 )({\rm Im} \,\eta^2) u+({\rm Re}\,\omega^2)( {\rm Re}\, \eta^2 )v}{2}}$ and $\theta_{3}={\rm Re}\, \omega\bar{\eta}$.

\begin{remark} The kernel of the  $(\kappa,a)$-generalized Fourier transform with dihedral symmetry can be obtained similarly.
\end{remark}
\subsection{Dunkl Bessel function}

Following \cite{DX},
we define the Dunkl Bessel function by
\[D_{\kappa}(z_{1}, z_{2})=\frac{1}{|I_{k}|}\sum_{g\in I_{k}}E_{\kappa}(z_{1}, g\cdot z_{2}).\]
Let $z_{1}=|z_{1}|e^{i\phi_{1}}$, $z_{2}=|z_{2}|e^{i\phi_{2}}$, $\phi_{1}, \phi_{2} \in [1,\pi/2k]$ and $b=|z_{1}||z_{2}|$. Then the Dunkl Bessel function associated to $I_{2k}, k\ge 2$ is given by (see \cite{Dn})
\[D_{\kappa}(|z_{1}|, \phi_{1},  |z_{2}|, \phi_{2} )=c_{k,\kappa}\biggl(\frac{2}{b}\biggr)^{\langle \kappa \rangle}\sum_{j= 0}^{\infty} i^{2kj+\langle \kappa\rangle } J_{2kj+\langle \kappa\rangle}(b)p_{j}^{\alpha-1/2,\beta-1/2}(\cos(2k\phi_{1})) p_{j}^{\alpha-1/2,\beta-1/2}(\cos(2k\phi_{2}))  \]
where $p_{j}^{\alpha-1/2,\beta-1/2}$ is the $j$-th orthonormal Jacobi polynomial of parameters $(\alpha-1/2,\beta-1/2)$ and \[c_{k,\kappa}=2^{\alpha+\beta}\frac{\Gamma(\langle \kappa\rangle +1)\Gamma(\alpha+1/2)\Gamma(\beta+1/2) }{\Gamma(\alpha+\beta+1)}.\]

With the Dijksma-Koornwinder product formula for the Jacobi polynomial, the Dunkl Bessel function becomes
\begin{eqnarray}\label{B1}D_{\kappa}(|z_{1}|, \phi_{1},  |z_{2}|, \phi_{2} )&=&\Gamma(\langle \kappa\rangle +1)\int_{-1}^{1}\int_{-1}^{1}\biggl(\frac{2}{b}\biggr)^{\langle \kappa \rangle}\sum_{j= 0}^{\infty}\frac{(2j+\alpha+\beta)}{\alpha+\beta} i^{2kj+\langle \kappa\rangle } J_{2kj+\langle \kappa\rangle}(b)\nonumber\\&&C_{2j}^{\alpha+\beta}(z_{k\phi_{1},k\phi_{2}}(u,v))\mu^{\alpha}(du)\mu^{\beta}(dv) \end{eqnarray}
where $\mu^{\alpha}$  is the symmetric beta probability measure
\[\mu^{\alpha} (du)=\frac{\Gamma(\alpha+1/2)}{\sqrt{\pi}\Gamma(\alpha)}(1-u^2)^{\alpha-1}du, \quad \alpha>-1,\]
and
 \[z_{\phi_{1},\phi_{2}}(u,v)=u\cos \phi_{1}\cos \phi_{2}+v\sin\phi_{1}\sin \phi_{2},\]
and $C_{2j}^{\alpha}(x)$ the Gegenbauer polynomial.
Now the integrand of (\ref{B1}) equals $\displaystyle\frac{f_{2k}^{+}+f_{2k}^{-}}{2}$ with
\begin{eqnarray*}f_{2k}^{\pm}(b, \xi)=\Gamma(k(\alpha+\beta)+1)\biggl(\frac{2}{b}\biggr)^{k(\alpha+\beta)}\sum_{j=0}^{\infty}\frac{(j+\alpha+\beta)}{\alpha+\beta} (\pm1)^{j} e^{i\frac{\pi}{2}k(j+\alpha+\beta) } J_{k(j+\alpha+\beta)}(b)C_{j}^{\alpha+\beta}(z_{k\phi_{1},k\phi_{2}}).
\end{eqnarray*}
As before, we introduce an auxiliary variable $t$ in the series
\begin{eqnarray*}f_{2k}^{\pm}(b, \xi, t)=\Gamma(k(\alpha+\beta)+1)\biggl(\frac{2}{b}\biggr)^{k(\alpha+\beta)}\sum_{j=0}^{\infty}\frac{(j+\alpha+\beta)}{\alpha+\beta} (\pm1)^{j} e^{i\frac{\pi}{2}k(j+\alpha+\beta) } J_{k(j+\alpha+\beta)}(bt)C_{j}^{\alpha+\beta}(z_{k\phi_{1}, k\phi_{2}}).
\end{eqnarray*}
and take the Laplace transform term by term. This yields
\begin{eqnarray}\label{bes}
\mathcal{L}(f_{2k}^{\pm})&=&\Gamma(k(\alpha+\beta)+1)\frac{(2e^{i\frac{\pi}{2}})^{k(\alpha+\beta)}}{r}\frac{R^{k}-\frac{(-1)^{k}b^{2k}}{R^{k}}}{(R^{k}-2(\pm (ib)^{k})z_{k\phi_{1},k\phi_{2}}+\frac{(-1)^{k}b^{2k}}{R^{k}} )^{\alpha+\beta+1}}
\nonumber\\&=&\Gamma(k(\alpha+\beta)+1)\frac{(2e^{i\frac{\pi}{2}})^{k(\alpha+\beta)}}{r}\frac{(r+s)^{k}-(-1)^{k}(r-s)^{k}}{((r+s)^{k}-2(\pm (ib)^{k})z_{k\phi_{1},k\phi_{2}}+(-1)^{k}(r-s)^{k} )^{\alpha+\beta+1}}
\end{eqnarray}
where $r=\sqrt{r^2+b^2}$, $R=s+r$.
Comparing (\ref{bes}) with (\ref{rl3}),  and using the same method as in Theorem \ref{th7}, we get $|f_{2k}^{\pm}|\le 1$.
Then we have
\[|D_{\kappa}(z_{1},z_{2})|=\biggl|\int_{-1}^{1}\int_{-1}^{1} \frac{f_{2k}^{+}+f_{2k}^{-}}{2} \mu^{\alpha}(du)\mu^{\beta}(dv)\biggr|\le 1\]
because $\int_{-1}^{1}\int_{-1}^{1}\mu^{\alpha}(du)\mu^{\beta}(dv)=1$, giving an alternative and direct proof of the boundedness of the Dunkl Bessel function. Also, using (\ref{bes}) and (\ref{rl3}), it is now in principle possible to find an integral expression for the Dunkl Bessel function. We illustate this for
 the dihedral group $I_{4}$. In that case, we have
\begin{eqnarray*}
\mathcal{L}(f_{4}^{\pm})&=&\Gamma(2(\alpha+\beta)+1)\frac{(2e^{i\frac{\pi}{2}})^{2(\alpha+\beta)}}{r}\frac{(r+s)^{2}-(-1)^{2}(r-s)^{2}}{((r+s)^{2}-2(\pm (ib)^{2})z_{2\phi_{1}, 2\phi_{2}}+(-1)^{2}(r-s)^{2} )^{\alpha+\beta+1}}
\\&=&\Gamma(2(\alpha+\beta)+1)e^{i\pi (\alpha+\beta)}\displaystyle\frac{s}{\biggl(s^2+b^2\biggl(\frac{1-(\pm z_{2\phi_{1},2\phi_{2}})}{2}\biggr)\biggr)^{\alpha+\beta+1}}.
\end{eqnarray*}
Using the inverse Laplace transform formula (\ref{nf1}),
we have, after evaluating at $t=1$,
\begin{eqnarray*}f_{4}^{+}+f_{4}^{-}&=&e^{i\pi(\alpha+\beta)}\frac{\sqrt{\pi}\Gamma(2(\alpha+\beta)+1)}
{\Gamma(\alpha+\beta+1)2^{\alpha+\beta+1/2}}\biggl(\frac{J_{\alpha+\beta-1/2}(b_{1})}{b_{1}^{\alpha+\beta-1/2}}
+\frac{J_{\alpha+\beta-1/2}(b_{2})}{b_{2}^{\alpha+\beta-1/2}}\biggr)\\
&=&e^{i\pi(\alpha+\beta)}2^{\alpha+\beta-1/2}
\Gamma(\alpha+\beta+1/2)\biggl(\frac{J_{\alpha+\beta-1/2}(b_{1})}{b_{1}^{\alpha+\beta-1/2}}
+\frac{J_{\alpha+\beta-1/2}(b_{2})}{b_{2}^{\alpha+\beta-1/2}}\biggr)
\end{eqnarray*}
where $b_{1}=\biggl(b\sqrt{\frac{1-z_{2\phi_{1},  2\phi_{2}}}{2}}\biggr)$, $b_{2}=\biggl(b\sqrt{\frac{1+ z_{2\phi_{1},  2\phi_{2}}}{2}}\biggr)$. In the second equality, we have used the Gauss duplication formula
\[\sqrt{\pi}\Gamma(2v)=2^{2v-1}\Gamma(v)\Gamma(v+1/2).\] Hence for $I_{4}$, the Dunkl  Bessel function is given by

\[D_{\kappa}(z_{1},z_{2})=e^{i\pi(\alpha+\beta)}2^{\alpha+\beta-3/2}
\Gamma(\alpha+\beta+1/2)\int_{-1}^{1}\int_{-1}^{1} \biggl(\frac{J_{\alpha+\beta-1/2}(b_{1})}{b_{1}^{\alpha+\beta-1/2}}
+\frac{J_{\alpha+\beta-1/2}(b_{2})}{b_{2}^{\alpha+\beta-1/2}}\biggr) \mu^{\alpha}(du)\mu^{\beta}(dv).\]
\begin{remark}
When $\alpha+\beta$ is integer, the integral expression of the Dunkl Bessel function associated to $I_{4}$ was obtained in \cite{Dn}. Our result hence extends this result to arbitrary $\alpha, \beta>0$.
\end{remark}
\begin{remark} For  odd dihedral groups, the integral expression of the Dunkl Bessel function is computed in a similar way.
\end{remark}

\

 {\footnotesize{\bf Acknowledgments} \qquad H. De Bie is supported by the UGent BOF starting grant 01N01513. P. Lian is supported by a scholarship from Chinese Scholarship Council (CSC) under the CSC No. 201406120169.}


\begin{thebibliography}{99}
\bibitem{amr}  Amri B. On the Integral representations for Dunkl kernels of type $A_{2}$.  {\em J. Lie Theory.}  {\bf 26}  (2016), 1163-1175.
\bibitem{Ade} Amri B, Demni N. Laplace-type integral representations of the generalized Bessel function and of the Dunkl kernel of type $B_2$. {\em Mosc. Math. J.} {\bf 17}  (2017), 175-190.


\bibitem{Said}
Ben~Sa{\"{\i}}d S.
\newblock On the integrability of a representation of $\mathfrak{sl}(2, \mR)$.
\newblock {\em  J. Funct. Anal.} {\bf 250} (2007), 249-264.

\bibitem{SKO}
Ben~Sa{\"{\i}}d S, Kobayashi T, {\O}rsted B. Laguerre semigroup and Dunkl operators. {\em Compos. Math.}  {\bf148} (2009), 1265-1336.


\bibitem{Bo} Brugia O. A noniterative method for the partial fraction expansion of a rational function with high order poles. {\em SIAM. Rev.}  {\bf 7} (1965), 381-387.



 \bibitem{Rad2} De Bie H. The kernel of the radially deformed Fourier transform. {\em Integral Transforms Spec. Funct.}  {\bf24}  (2013), 1000-1008.


\bibitem{deJ}
de Jeu M.
\newblock The Dunkl transform.
\newblock {\em Invent. Math.} {\bf 113} (1993), 147--162.


 \bibitem{DJ} de Jeu M. Paley-Wiener theorems for the Dunkl transform. {\em Trans. Amer. Math. Soc}. {\bf 358} (2006), 4225-4250.

  \bibitem{DDY}Deleaval L, Demni N, Youssfi H. Dunkl kernel associated with dihedral groups. {\em J. Math. Anal. Appl.}  {\bf 432} (2015), 928-944.
\bibitem{DDa} Deleaval L, Demni N. On a Neumann-type series for modified Bessel functions of the kind. {\em Proc. Amer. Math. Soc.} to appear.

\bibitem{Dn} Demni N. Generalized Bessel function associated with dihedral groups. {\em J. Lie Theory.}
{\bf 22} (2012), 81-91.
    \bibitem{DG} Doetsch G. \newblock{\em Introduction to the Theory and Application of the Laplace Transformation.} Springer Science and Business Media, 2012.
 \bibitem{D} Dunkl C F.  Poisson and Cauchy kernels for orthogonal polynomials with dihedral symmetry,
{\em J. Math. Anal. Appl.} {\bf 143} (1989), 459-470.
\bibitem{D1} Dunkl C F. Intertwining operators associated to the group $S_{3}$. {\em Trans. Amer. Math. Soc.} {\bf 347} (1995), 3347-3374.


\bibitem{DX} Dunkl C F, Xu Y. \newblock{\em Orthogonal polynomials of several variables.} Cambridge University Press, 2014.
\bibitem{E2} Erd\'{e}lyi, A, ed. {\em Tables of integral transforms.} Vol. 1. New York: McGraw-Hill, 1954.
\bibitem{HR} Howe, R. \newblock{\em The oscillator semigroup, The mathematical heritage of Hermann Weyl (Durham,
NC, 1987).}  Providence,
RI: Amer. Math. Soc. 1988.
\bibitem{Go} Gorbachev D V, Ivanov V I, Tikhonov S Y. Pitt's inequalities and uncertainty principle for generalized Fourier transform. {\em Int. Math. Res. Not. IMRN} {\bf 27}  (2016), 8-52.


\bibitem{J1} Johansen T R. Weighted inequalities and uncertainty principles for the $(\kappa, a)$-generalized Fourier transform. {\em  Internat. J. Math.}  {\bf 27}  (2016), 8-52.

\bibitem{Rad3} Kobayashi T, Mano G. Integral formulas for the minimal representation of $O(p, 2)$. {\em Acta Appl. Math.} {\bf 86} (2005), 103-113.
 \bibitem{KM} Kobayashi T,  Mano G. \newblock {\em The Schr\"{o}dinger model for the minimal representation of the
indefinite orthogonal group $O(p;q)$.}  Providence, American Mathematical Society, 2011.

\bibitem{MH} Mathai A M, Haubold H J. \newblock {\em Special Functions for Applied Scientists.} Springer New York, 2008. 
\bibitem{olf} Olver F W J, Lozier D W, Boisvert R F, et al. \newblock {\em NIST handbook of mathematical functions.} US Department of Commerce, National Institute of Standards and Technology. 2010.
 \bibitem{O1} Opdam E M. Dunkl operators, Bessel functions and the discriminant of a finite Coxeter
group. {\em Compos. Math.} {\bf 85} (1993), 333-373.
\bibitem{PB} Prudnikov A P, Brychkov Y A, Marichev O I. {\em Integrals and Series, Volume 5: Inverse Laplace Transforms.} Gordon,  Breach Science Publishers, Philadelphia, PA, 1992.

  \bibitem{rm} R\"{o}sler M. \newblock {\em  Dunkl operators: theory and applications, Orthogonal polynomials and special functions. } Springer Berlin Heidelberg, 2003, 93-135.
  \bibitem{rm1} R\"{o}sler M.  Positivity of Dunkl's intertwining operator. {\em Duke Math. J.}  {\bf98} (1999), 445-464.

\end{thebibliography}
\end{document}